\numberwithin{equation}{section}
\begin{document}

\title{Two-weight, weak type norm inequalities for a class of sublinear operators on weighted Morrey and amalgam spaces}
\author{Hua Wang \footnote{E-mail address: wanghua@pku.edu.cn.}\\
\footnotesize{College of Mathematics and Econometrics, Hunan University, Changsha 410082, P. R. China}\\
\footnotesize{\&~Department of Mathematics and Statistics, Memorial University, St. John's, NL A1C 5S7, Canada}}
\date{}
\maketitle

\begin{abstract}
Let $\mathcal T_\alpha~(0\leq\alpha<n)$ be a class of sublinear operators satisfying certain size conditions introduced by Soria and Weiss, and let $[b,\mathcal T_\alpha]~(0\leq\alpha<n)$ be the commutators generated by $\mathrm{BMO}(\mathbb R^n)$ functions and $\mathcal T_\alpha$. This paper is concerned with two-weight, weak type norm estimates for these sublinear operators and their commutators on the weighted Morrey and amalgam spaces. Some boundedness criterions for such operators are given, under the assumptions that weak-type norm inequalities on weighted Lebesgue spaces are satisfied. As applications of our main results, we can obtain the weak-type norm inequalities for several integral operators as well as the corresponding commutators in the framework of weighted Morrey and amalgam spaces.\\
MSC(2010): 42B20; 42B35; 47B38; 47G10\\
Keywords: Sublinear operators; weighted Morrey spaces; weighted amalgam spaces; commutators; weak-type norm inequalities.
\end{abstract}

\section{Introduction and main results}

\subsection{Sublinear operators}

Let $\mathbb R^n$ be the $n$-dimensional Euclidean space equipped with the Euclidean norm $|\cdot|$ and the Lebesgue measure $dx$. Suppose that $\mathcal T$ represents a linear or a sublinear operator, which satisfies that for any $f\in L^1(\mathbb R^n)$ with compact support and $x\notin\mathrm{supp}\, f$,
\begin{equation}\label{sublinear}
\big|\mathcal Tf(x)\big|\leq c_1\int_{\mathbb R^n}\frac{|f(y)|}{|x-y|^n}dy,
\end{equation}
where $c_1$ is a universal constant independent of $f$ and $x\in\mathbb R^n$. The size condition \eqref{sublinear} was first introduced by Soria and Weiss in \cite{soria}. It can be proved that \eqref{sublinear} is satisfied by many integral operators in harmonic analysis, such as the Hardy--Littlewood maximal operator, Calder\'on--Zygmund singular integral operators, Ricci--Stein's oscillatory singular integrals and Bochner--Riesz operators at the critical index and so on.

Similarly, for given $0<\gamma<n$, we assume that $\mathcal T_\gamma$ represents a linear or a sublinear operator with order $\gamma$, which satisfies that for any $f\in L^1(\mathbb R^n)$ with compact support and $x\notin\mathrm{supp}\, f$,
\begin{equation}\label{frac sublinear}
\big|\mathcal T_\gamma f(x)\big|\leq c_2\int_{\mathbb R^n}\frac{|f(y)|}{|x-y|^{n-\gamma}}dy,
\end{equation}
where $c_2$ is also a universal constant independent of $f$ and $x\in\mathbb R^n$. It can be easily checked that (\ref{frac sublinear}) is satisfied by some important operators such as the fractional maximal operator, Riesz potential operators and fractional oscillatory singular integrals and so on.

Let $b$ be a locally integrable function on $\mathbb R^n$, suppose that the commutator operator $[b,\mathcal T]$ stands for a linear or a sublinear operator, which satisfies that for any $f\in L^1(\mathbb R^n)$ with compact support and $x\notin\mathrm{supp}\, f$,
\begin{equation}\label{sublinear commutator}
\big|[b,\mathcal T](f)(x)\big|\leq c_3\int_{\mathbb R^n}\frac{|b(x)-b(y)|\cdot|f(y)|}{|x-y|^n}dy,
\end{equation}
where $c_3$ is an absolute constant independent of $f$ and $x\in\mathbb R^n$.

Similarly, for given $0<\gamma<n$, we assume that the commutator operator $[b,\mathcal T_\gamma]$ stands for a linear or a sublinear operator, which satisfies that for any $f\in L^1(\mathbb R^n)$ with compact support and $x\notin\mathrm{supp}\, f$,
\begin{equation}\label{frac sublinear commutator}
\big|[b,\mathcal T_\gamma](f)(x)\big|\leq c_4\int_{\mathbb R^n}\frac{|b(x)-b(y)|\cdot|f(y)|}{|x-y|^{n-\gamma}}dy,
\end{equation}
where $c_4$ is also an absolute constant independent of $f$ and $x\in\mathbb R^n$. Clearly, based on the above assumptions,
\begin{equation*}
\mathcal T_0=\mathcal T \qquad \& \qquad [b,\mathcal T_0]=[b,\mathcal T].
\end{equation*}
\subsection{Weighted Morrey spaces}
\newtheorem{theorem}{Theorem}[section]

\newtheorem{lemma}{Lemma}[section]

\newtheorem{corollary}[theorem]{Corollary}

\newtheorem{defn}{Definition}[section]
The classical Morrey space was introduced by Morrey \cite{morrey} in connection with elliptic partial differential equations. Let $1\leq p<\infty$ and $0\leq\lambda\leq n$. We recall that a real-valued function $f$ is said to belong to the space $L^{p,\lambda}$ on the $n$-dimensional Euclidean space $\mathbb R^n$, if the following norm is finite:
\begin{equation*}
\|f\|_{L^{p,\lambda}}:=\sup_{(x,r)\in\mathbb R^n\times(0,\infty)}\bigg(r^{\lambda-n}\int_{B(x,r)}|f(y)|^p\,dy\bigg)^{1/p},
\end{equation*}
where $B(x,r)=\big\{y\in\mathbb R^n:|x-y|<r\big\}$ is the Euclidean ball with center $x\in\mathbb R^n$ and radius $r\in(0,\infty)$. In particular, one has
\begin{equation*}
L^{p,0}=L^\infty \qquad\&\qquad L^{p,n}=L^p.
\end{equation*}

In \cite{komori}, Komori and Shirai considered the weighted case, and gave the definitions of the weighted Morrey spaces as follows.
\begin{defn}
Let $1<p<\infty$ and $0\leq\kappa<1$. For two weights $w$ and $\nu$ on $\mathbb R^n$, the weighted Morrey space $L^{p,\kappa}(\nu,w)$ is defined by
\begin{equation*}
L^{p,\kappa}(\nu,w):=\Big\{f\in L^p_{loc}(\nu):\big\|f\big\|_{L^{p,\kappa}(\nu,w)}<\infty\Big\},
\end{equation*}
where the norm is given by
\begin{equation*}
\big\|f\big\|_{L^{p,\kappa}(\nu,w)}
:=\sup_{Q\subset\mathbb R^n}\bigg(\frac{1}{w(Q)^{\kappa}}\int_Q |f(x)|^p\nu(x)\,dx\bigg)^{1/p},
\end{equation*}
and the supremum is taken over all cubes $Q$ in $\mathbb R^n$.
\end{defn}

\begin{defn}
Let $1<p<\infty$, $0\leq\kappa<1$ and $w$ be a weight on $\mathbb R^n$. We define the weighted weak Morrey space $WL^{p,\kappa}(w)$ as the set of all measurable functions $f$ satisfying
\begin{equation*}
\big\|f\big\|_{WL^{p,\kappa}(w)}:=\sup_{Q\subset\mathbb R^n}\sup_{\sigma>0}\frac{1}{w(Q)^{{\kappa}/p}}\sigma
\cdot \Big[w\big(\big\{x\in Q:|f(x)|>\sigma\big\}\big)\Big]^{1/p}<\infty.
\end{equation*}
\end{defn}
By definition, it is clear that
\begin{equation*}
L^{p,0}(\nu,w)=L^p(\nu)\qquad \& \qquad WL^{p,0}(w)=WL^p(w).
\end{equation*}

\subsection{Weighted amalgam spaces}

Let $1\leq p,q\leq\infty$, a function $f\in L^p_{loc}(\mathbb R^n)$ is said to be in the Wiener amalgam space $(L^p,L^q)(\mathbb R^n)$ of $L^p(\mathbb R^n)$ and $L^q(\mathbb R^n)$, if the function $y\mapsto\|f(\cdot)\cdot\chi_{B(y,1)}\|_{L^p}$ belongs to $L^q(\mathbb R^n)$, where $B(y,1)$ is an open ball in $\mathbb R^n$ centered at $y$ with radius $1$, $\chi_{B(y,1)}$ is the characteristic function of the ball $B(y,1)$, and $\|\cdot\|_{L^p}$ is the usual Lebesgue norm in $L^p(\mathbb R^n)$. In \cite{fofana}, Fofana introduced a new class of function spaces $(L^p,L^q)^{\alpha}(\mathbb R^n)$ which turn out to be the subspaces of $(L^p,L^q)(\mathbb R^n)$. More precisely, for $1\leq p,q,\alpha\leq\infty$, we define the amalgam space $(L^p,L^q)^{\alpha}(\mathbb R^n)$ of $L^p(\mathbb R^n)$ and $L^q(\mathbb R^n)$ as the set of all measurable functions $f$ satisfying $f\in L^p_{loc}(\mathbb R^n)$ and $\big\|f\big\|_{(L^p,L^q)^{\alpha}(\mathbb R^n)}<\infty$, where
\begin{equation*}
\begin{split}
\big\|f\big\|_{(L^p,L^q)^{\alpha}(\mathbb R^n)}
:=&\sup_{r>0}\left\{\int_{\mathbb R^n}\Big[\big|B(y,r)\big|^{1/{\alpha}-1/p-1/q}\big\|f\cdot\chi_{B(y,r)}\big\|_{L^p(\mathbb R^n)}\Big]^qdy\right\}^{1/q}\\
=&\sup_{r>0}\Big\|\big|B(y,r)\big|^{1/{\alpha}-1/p-1/q}\big\|f\cdot\chi_{B(y,r)}\big\|_{L^p(\mathbb R^n)}\Big\|_{L^q(\mathbb R^n)},
\end{split}
\end{equation*}
with the usual modification when $p=\infty$ or $q=\infty$, and $|B(y,r)|$ is the Lebesgue measure of the ball $B(y,r)$. As it was shown in \cite{fofana} that the space $(L^p,L^q)^{\alpha}(\mathbb R^n)$ is non-trivial if and only if $p\leq\alpha\leq q$; thus in the remaining of this paper we will always assume that this condition $p\leq\alpha\leq q$ is satisfied. Let us consider the following special cases:
\begin{enumerate}
  \item If we take $p=q$, then $p=\alpha=q$. It is easy to check that
\begin{equation*}
\begin{split}
&\Big\|\big|B(y,r)\big|^{-1/p}\big\|f\cdot\chi_{B(y,r)}\big\|_{L^p(\mathbb R^n)}\Big\|_{L^p(\mathbb R^n)}\\
=&\bigg[\int_{\mathbb R^n}\big|B(y,r)\big|^{-1}\bigg(\int_{\mathbb R^n}|f(x)|^p\cdot\chi_{B(y,r)}\,dx\bigg)\,dy\bigg]^{1/p}\\
=&\bigg[\int_{\mathbb R^n}\big|B(y,r)\big|^{-1}\bigg(\int_{B(x,r)}|f(x)|^p\,dy\bigg)\,dx\bigg]^{1/p}\\
=&\bigg[\int_{\mathbb R^n}|f(x)|^p\,dx\bigg]^{1/p}.
\end{split}
\end{equation*}
Hence, the amalgam space $(L^p,L^q)^{\alpha}(\mathbb R^n)$ is equal to the Lebesgue space $L^p(\mathbb R^n)$ with the same norms provided that $p=\alpha=q$.
  \item If $q=\infty$, then we can see that the amalgam space $(L^p,L^q)^{\alpha}(\mathbb R^n)$ is equal to the Morrey space $L^{p,\lambda}(\mathbb R^n)$ with equivalent norms, where $\lambda={(pn)}/{\alpha}$.
\end{enumerate}

In this paper, we will consider the weighted version of $(L^p,L^q)^{\alpha}(\mathbb R^n)$.
\begin{defn}\label{amalgam}
Let $1\leq p\leq\alpha\leq q\leq\infty$, and let $\nu,w,\mu$ be three weights on $\mathbb R^n$. We denote by $(L^p,L^q)^{\alpha}(\nu,w;\mu)$ the weighted amalgam space, the space of all locally integrable functions $f$ such that
\begin{equation*}
\begin{split}
\big\|f\big\|_{(L^p,L^q)^{\alpha}(\nu,w;\mu)}
:=&\sup_{\ell>0}\left\{\int_{\mathbb R^n}\Big[w(Q(y,\ell))^{1/{\alpha}-1/p-1/q}\big\|f\cdot\chi_{Q(y,\ell)}\big\|_{L^p(\nu)}\Big]^q\mu(y)\,dy\right\}^{1/q}\\
=&\sup_{\ell>0}\Big\|w(Q(y,\ell))^{1/{\alpha}-1/p-1/q}\big\|f\cdot\chi_{Q(y,\ell)}\big\|_{L^p(\nu)}\Big\|_{L^q(\mu)}<\infty,
\end{split}
\end{equation*}
with $w(Q(y,\ell))=\displaystyle\int_{Q(y,\ell)}w(x)\,dx$ and the usual modification when $q=\infty$.
\end{defn}
\begin{defn}
Let $1\leq p\leq\alpha\leq q\leq\infty$, and let $w,\mu$ be two weights on $\mathbb R^n$. We denote by $(WL^p,L^q)^{\alpha}(w;\mu)$ the weighted weak amalgam space consisting of all measurable functions $f$ such that
\begin{equation*}
\begin{split}
\big\|f\big\|_{(WL^p,L^q)^{\alpha}(w;\mu)}
:=&\sup_{\ell>0}\left\{\int_{\mathbb R^n}\Big[w(Q(y,\ell))^{1/{\alpha}-1/p-1/q}\big\|f\cdot\chi_{Q(y,\ell)}\big\|_{WL^p(w)}\Big]^q\mu(y)\,dy\right\}^{1/q}\\
=&\sup_{\ell>0}\Big\|w(Q(y,\ell))^{1/{\alpha}-1/p-1/q}\big\|f\cdot\chi_{Q(y,\ell)}\big\|_{WL^p(w)}\Big\|_{L^q(\mu)}<\infty,
\end{split}
\end{equation*}
with $w(Q(y,\ell))=\displaystyle\int_{Q(y,\ell)}w(x)\,dx$ and the usual modification when $q=\infty$.
\end{defn}
Note that when $\mu\equiv1$, this kind of weighted (weak) amalgam space was introduced by Feuto in \cite{feuto2} (see also \cite{feuto1}). We remark that Feuto considered ball $B$ instead of cube $Q$ in his definition, but these two definitions are apparently equivalent. Also note that when $1\leq p\leq\alpha$ and $q=\infty$, then $(L^p,L^q)^{\alpha}(\nu,w;\mu)$ is just the weighted Morrey space $L^{p,\kappa}(\nu,w)$ with $\kappa=1-p/{\alpha}$, and $(WL^p,L^q)^{\alpha}(w;\mu)$ is just the weighted weak Morrey space $W L^{p,\kappa}(w)$ with $\kappa=1-p/{\alpha}$.

The two-weight problem for classical integral operators has been extensively studied. In \cite{cruz1,cruz2,cruz3} and \cite{liu}, the authors gave some $A_p$-type conditions which are sufficient for the two-weight, weak-type $(p,p)$ inequalities for Calder\'on--Zygmund operators, fractional integral operators, as well as their commutators on the weighted Lebesgue spaces. In \cite{liulz}, the authors established the two-weight, weak-type $(p,p)$ estimates for maximal Bochner--Riesz operators and their commutators. Inspired by the above results, it is natural and interesting to study the weak-type estimates for sublinear operators \eqref{sublinear} and \eqref{frac sublinear}, as well as the corresponding commutators \eqref{sublinear commutator} and \eqref{frac sublinear commutator}.

Let $p'$ be the conjugate index of $p$ whenever $p>1$; that is, $1/p+1/{p'}=1$. The main purpose of this paper
is to investigate the two-weight, weak-type norm inequalities in the setting of weighted Morrey and amalgam spaces. Our main results can be stated as follows.
On the boundedness properties of the sublinear operators and their commutators on weighted Morrey spaces, we will prove
\begin{theorem}\label{mainthm:1}
Let $1<p<\infty$, $0<\kappa<1$ and $\mathcal T$ satisfy the condition $(\ref{sublinear})$. Given a pair of weights $(w,\nu)$, suppose that for some $r>1$ and for all cubes $Q$ in $\mathbb R^n$,
\begin{equation*}
\bigg(\frac{1}{|Q|}\int_Q w(x)^r\,dx\bigg)^{1/{(rp)}}\bigg(\frac{1}{|Q|}\int_Q \nu(x)^{-p'/p}\,dx\bigg)^{1/{p'}}\leq C<\infty.\qquad (\S)
\end{equation*}
Furthermore, we suppose that $\mathcal T$ satisfies the weak-type $(p,p)$ inequality
\begin{equation}\label{assump1.2}
w\big(\big\{x\in\mathbb R^n:\big|\mathcal Tf(x)\big|>\sigma\big\}\big)\leq \frac{C}{\sigma^p}\int_{\mathbb R^n}|f(x)|^p \nu(x)\,dx,
\quad\mbox{for any }~\sigma>0,
\end{equation}
where $C$ does not depend on $f$ and $\sigma>0$. If $w\in\Delta_2$, then the operator $\mathcal T$ is bounded from $L^{p,\kappa}(\nu,w)$ into $WL^{p,\kappa}(w)$.
\end{theorem}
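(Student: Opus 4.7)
The plan is the standard local/global decomposition adapted to the two-weight, weak-type Morrey setting. Fix a cube $Q$ and $\sigma>0$, and split $f=f_1+f_2$ with $f_1=f\chi_{2Q}$ and $f_2=f\chi_{\mathbb R^n\setminus 2Q}$. By sublinearity it suffices to control
$$w\big(\{x\in Q:|\mathcal Tf_i(x)|>\sigma/2\}\big),\qquad i=1,2,$$
by $C\sigma^{-p}w(Q)^{\kappa}\|f\|_{L^{p,\kappa}(\nu,w)}^{p}$, and then take suprema. For the local piece I simply invoke the hypothesis \eqref{assump1.2} with $f_1$ in place of $f$, bound $\int_{2Q}|f|^p\nu$ by $\|f\|_{L^{p,\kappa}(\nu,w)}^{p}w(2Q)^{\kappa}$, and absorb the factor $w(2Q)$ into $w(Q)$ using $w\in\Delta_2$. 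This part is routine.

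The bulk of the work is the global piece $\mathcal Tf_2$. For $x\in Q$ and $y\notin 2Q$ one has $|x-y|\sim|x_Q-y|$ where $x_Q$ is the center of $Q$, so \eqref{sublinear} together with the annular decomposition $(2Q)^c=\bigcup_{k\ge1}(2^{k+1}Q\setminus 2^kQ)$ yields
$$|\mathcal Tf_2(x)|\le C\sum_{k=1}^{\infty}\frac{1}{|2^kQ|}\int_{2^{k+1}Q}|f(y)|\,dy.$$
I estimate each inner integral by Hölder with exponents $p,p'$ and the definition of the Morrey norm to get
$$\int_{2^{k+1}Q}|f|\;\le\;\|f\|_{L^{p,\kappa}(\nu,w)}\,w(2^{k+1}Q)^{\kappa/p}\Bigl(\int_{2^{k+1}Q}\nu^{-p'/p}\Bigr)^{1/p'}.$$

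The two-weight hypothesis $(\S)$ applied on $2^{k+1}Q$, combined with the trivial Hölder bound $w(2^{k+1}Q)\le|2^{k+1}Q|^{1/r'}\bigl(\int w^r\bigr)^{1/r}$ that converts the $w^r$-average into the $w$-measure, collapses to
$$\Bigl(\int_{2^{k+1}Q}\nu^{-p'/p}\Bigr)^{1/p'}\le C\,|2^{k+1}Q|\,w(2^{k+1}Q)^{-1/p},$$
since the exponents on $|2^{k+1}Q|$ telescope via $\tfrac{1}{rp}+\tfrac{1}{pr'}=\tfrac{1}{p}$. Substituting back gives the clean pointwise estimate
$$\frac{1}{|2^kQ|}\int_{2^{k+1}Q}|f|\;\le\;C\,\|f\|_{L^{p,\kappa}(\nu,w)}\,w(2^{k+1}Q)^{(\kappa-1)/p}.$$

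Now the series in $k$ converges geometrically: since $\kappa-1<0$, one uses the reverse-doubling consequence of $w\in\Delta_2$ (namely the existence of $\eta>0$ with $w(2^{k+1}Q)\ge c\,2^{k\eta}w(Q)$) to sum to $C\,w(Q)^{(\kappa-1)/p}\|f\|_{L^{p,\kappa}(\nu,w)}$. This produces the uniform pointwise bound $|\mathcal Tf_2(x)|\le C\,\|f\|_{L^{p,\kappa}(\nu,w)}\,w(Q)^{(\kappa-1)/p}$ on $Q$, so the level set is empty whenever $\sigma/2$ exceeds this bound, and otherwise $\sigma^{p}w(\{\cdot\})\le\sigma^{p}w(Q)\le C\,\|f\|_{L^{p,\kappa}(\nu,w)}^{p}w(Q)^{\kappa}$. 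The main technical obstacle is the exponent bookkeeping in the global estimate — threading the $(\S)$ condition with the auxiliary $r$ through Hölder to convert the $\nu^{-p'/p}$-integral into the weight $w$ — and verifying that the doubling assumption alone suffices to make the resulting series summable.
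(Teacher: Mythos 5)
Your proof is correct and follows essentially the same route as the paper: same $f=f_1+f_2$ decomposition, same annular pointwise estimate for $\mathcal T f_2$, same H\"older step followed by condition $(\S)$ and the auxiliary H\"older bound $w(2^{k+1}Q)\le|2^{k+1}Q|^{1/r'}\big(\int_{2^{k+1}Q}w^r\big)^{1/r}$ to convert the $\nu^{-p'/p}$-integral into the weight $w$, and the same reverse-doubling argument to sum the geometric series. The only cosmetic difference is in the last step: the paper applies Chebyshev to pass to $\|\mathcal T f_2\cdot\chi_Q\|_{L^p(w)}$ and then uses the $L^\infty(Q)$ bound, whereas you observe directly that the uniform bound on $Q$ makes the level set either empty or trivially of $w$-measure at most $w(Q)$; the two are logically equivalent.
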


\begin{theorem}\label{mainthm:2}
Let $1<p<\infty$, $0<\kappa<1$ and $\mathcal T_\gamma$ satisfy the condition $(\ref{frac sublinear})$ with $0<\gamma<n$. Given a pair of weights $(w,\nu)$, suppose that for some $r>1$ and for all cubes $Q$ in $\mathbb R^n$,
\begin{equation*}
\big|Q\big|^{\gamma/n}\cdot\bigg(\frac{1}{|Q|}\int_Q w(x)^r\,dx\bigg)^{1/{(rp)}}\bigg(\frac{1}{|Q|}\int_Q \nu(x)^{-p'/p}\,dx\bigg)^{1/{p'}}\leq C<\infty.\qquad(\S')
\end{equation*}
Furthermore, we suppose that $\mathcal T_\gamma$ satisfies the weak-type $(p,p)$ inequality
\begin{equation}\label{assump2.2}
w\big(\big\{x\in\mathbb R^n:\big|\mathcal T_\gamma f(x)\big|>\sigma\big\}\big)
\leq \frac{C}{\sigma^p}\int_{\mathbb R^n}|f(x)|^p \nu(x)\,dx,\quad\mbox{for any }~\sigma>0,
\end{equation}
where $C$ does not depend on $f$ and $\sigma>0$. If $w\in\Delta_2$, then the operator $\mathcal T_\gamma$ is bounded from $L^{p,\kappa}(\nu,w)$ into $WL^{p,\kappa}(w)$.
\end{theorem}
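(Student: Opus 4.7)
My plan is to adapt the standard Morrey-space splitting argument to the fractional weak-type setting. Fix an arbitrary cube $Q=Q(x_0,r_Q)$, write $f=f_1+f_2$ with $f_1=f\chi_{2Q}$ and $f_2=f\chi_{(2Q)^c}$, and bound
\begin{equation*}
w\bigl(\{x\in Q:|\mathcal T_\gamma f(x)|>\sigma\}\bigr)\leq J_1(\sigma)+J_2(\sigma),\qquad J_j(\sigma):=w\bigl(\{x\in Q:|\mathcal T_\gamma f_j(x)|>\sigma/2\}\bigr),
\end{equation*}
by $C\sigma^{-p}\|f\|_{L^{p,\kappa}(\nu,w)}^p\,w(Q)^\kappa$; taking $\sup_\sigma\sup_Q$ then yields the theorem. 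The local term $J_1$ is the easy one: applying \eqref{assump2.2} to $f_1$ gives $J_1(\sigma)\leq C\sigma^{-p}\int_{2Q}|f|^p\nu$, the Morrey norm absorbs this into $C\sigma^{-p}\|f\|_{L^{p,\kappa}(\nu,w)}^p w(2Q)^\kappa$, and $w\in\Delta_2$ replaces $w(2Q)^\kappa$ by $Cw(Q)^\kappa$.

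\textbf{Far-away term.} For $J_2$, the fractional size condition \eqref{frac sublinear}, the dyadic annular decomposition $(2Q)^c=\bigcup_{k\geq1}(2^{k+1}Q\setminus 2^kQ)$, and the comparison $|x-y|\sim|2^{k+1}Q|^{1/n}$ for $x\in Q$ and $y\in 2^{k+1}Q\setminus 2^kQ$ give
\begin{equation*}
|\mathcal T_\gamma f_2(x)|\leq c_2\sum_{k=1}^\infty|2^{k+1}Q|^{\gamma/n-1}\int_{2^{k+1}Q}|f(y)|\,dy.
\end{equation*}
I then apply Hölder's inequality with exponents $(p,p')$ to each annular integral: the $L^p(\nu)$-factor is bounded by $\|f\|_{L^{p,\kappa}(\nu,w)}w(2^{k+1}Q)^{\kappa/p}$ via the Morrey norm, and the $L^{p'}(\nu^{-p'/p})$-factor is controlled by hypothesis $(\S')$, whose built-in $|2^{k+1}Q|^{\gamma/n}$ is precisely what cancels the $|2^{k+1}Q|^{\gamma/n}$ contributed by the kernel. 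A final reverse use of Hölder's inequality, $\bigl(\tfrac{1}{|Q_k|}\int_{Q_k}w^r\bigr)^{-1/(rp)}\leq(|Q_k|/w(Q_k))^{1/p}$ with $Q_k:=2^{k+1}Q$, collapses the leftover $|Q_k|$-powers and leaves
\begin{equation*}
|\mathcal T_\gamma f_2(x)|\leq C\,\|f\|_{L^{p,\kappa}(\nu,w)}\sum_{k=1}^\infty w(2^{k+1}Q)^{(\kappa-1)/p},\qquad x\in Q.
\end{equation*}

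\textbf{Summation and closing.} Because $(\kappa-1)/p<0$, the doubling condition $w\in\Delta_2$, through its reverse-doubling consequence $w(2^{k+1}Q)\geq\theta^k w(Q)$ for some $\theta>1$, sums the series to $Cw(Q)^{(\kappa-1)/p}$. Consequently either the resulting pointwise bound on $|\mathcal T_\gamma f_2|$ is already $\leq\sigma/2$, making $J_2(\sigma)=0$, or $w(Q)^{1-\kappa}\leq C\sigma^{-p}\|f\|_{L^{p,\kappa}(\nu,w)}^p$, in which case
\begin{equation*}
J_2(\sigma)\leq w(Q)=w(Q)^{\kappa}\cdot w(Q)^{1-\kappa}\leq \frac{C}{\sigma^p}\|f\|_{L^{p,\kappa}(\nu,w)}^p\,w(Q)^\kappa.
\end{equation*}
The tight calibration inside $J_2$ is the main obstacle: the factor $|2^{k+1}Q|^{\gamma/n}$ from the kernel must exactly match the one appearing in $(\S')$, and the reverse-doubling shadow of $\Delta_2$ is essential in order to sum the tail into a single power of $w(Q)$; everything else is bookkeeping with Hölder's inequality and the Morrey norm.
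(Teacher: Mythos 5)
Your proof is correct and follows essentially the same route as the paper: the same $f=f_1+f_2$ split, the same application of \eqref{assump2.2} plus $\Delta_2$ for $f_1$, the same pointwise annular estimate from \eqref{frac sublinear}, and the same chain of H\"older, the $(\S')$ condition, the crude bound $w(Q_k)\le|Q_k|^{1/r'}(\int_{Q_k}w^r)^{1/r}$, and reverse doubling. The only cosmetic difference is in closing the $J_2$ estimate: the paper applies Chebyshev's inequality first and then runs the H\"older/$(\S')$/reverse-doubling chain inside the resulting $L^p(w)$ norm, whereas you run that chain first to produce a bound on $|\mathcal T_\gamma f_2|$ that is constant on $Q$ and then finish with the dichotomy (level set empty, or $w(Q)^{1-\kappa}\lesssim\sigma^{-p}\|f\|^p$); for a constant bound on $Q$ these two closings are equivalent.
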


\begin{theorem}\label{mainthm:3}
Let $1<p<\infty$, $0<\kappa<1$, $b\in \mathrm{BMO}(\mathbb R^n)$ and $[b,\mathcal T]$ satisfy the condition $(\ref{sublinear commutator})$. Given a pair of weights $(w,\nu)$, suppose that for some $r>1$ and for all cubes $Q$ in $\mathbb R^n$,
\begin{equation*}
\bigg(\frac{1}{|Q|}\int_Q w(x)^r\,dx\bigg)^{1/{(rp)}}\big\|\nu^{-1/p}\big\|_{\mathcal A,Q}\leq C<\infty,\qquad(\S\S)
\end{equation*}
where $\mathcal A(t)=t^{p'}\big[\log(e+t)\big]^{p'}$. Furthermore, we suppose that $[b,\mathcal T]$ satisfies the weak-type $(p,p)$ inequality
\begin{equation}\label{assump3.2}
w\big(\big\{x\in\mathbb R^n:\big|[b,\mathcal T](f)(x)\big|>\sigma\big\}\big)
\leq\frac{C}{\sigma^p}\int_{\mathbb R^n}|f(x)|^p\nu(x)\,dx,\quad\mbox{for any }~\sigma>0,
\end{equation}
where $C$ does not depend on $f$ and $\sigma>0$. If $w\in A_\infty$, then the commutator operator $[b,\mathcal T]$ is bounded from $L^{p,\kappa}(\nu,w)$ into $WL^{p,\kappa}(w)$.
\end{theorem}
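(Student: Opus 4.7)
The plan is to adapt the dyadic decomposition behind Theorems~\ref{mainthm:1}--\ref{mainthm:2}, while carefully absorbing the extra BMO factor coming from~\eqref{sublinear commutator}. Fix a cube $Q\subset\mathbb R^n$ and a level $\sigma>0$; in view of the definition of $\|\cdot\|_{WL^{p,\kappa}(w)}$, it suffices to establish
\[
w\bigl(\{x\in Q:|[b,\mathcal T]f(x)|>\sigma\}\bigr)\le C\sigma^{-p}\,\|f\|^{p}_{L^{p,\kappa}(\nu,w)}\,w(Q)^{\kappa}
\]
with $C$ independent of $Q$, $\sigma$, and $f$. Write $f=f_{1}+f_{2}$ with $f_{1}=f\chi_{2Q}$, and denote by $\mathrm{I}$ and $\mathrm{II}$ the analogous quantities for $[b,\mathcal T]f_{1}$ and $[b,\mathcal T]f_{2}$ at height $\sigma/2$. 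For $\mathrm{I}$ the weak-type hypothesis~\eqref{assump3.2} together with the Morrey norm gives $\mathrm{I}\le C\sigma^{-p}\|f\|^{p}_{L^{p,\kappa}(\nu,w)}w(2Q)^{\kappa}$, and the inclusion $w\in A_\infty\subset\Delta_{2}$ yields $w(2Q)\le Cw(Q)$.

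The substantive part is $\mathrm{II}$. Using~\eqref{sublinear commutator} and the splitting $b(x)-b(y)=[b(x)-b_{2Q}]-[b(y)-b_{2Q}]$, for $x\in Q$ one has
\[
|[b,\mathcal T]f_{2}(x)|\;\le\;c_{3}\,|b(x)-b_{2Q}|\,\Lambda_{1}+c_{3}\,\Lambda_{2},
\]
where a dyadic-annulus decomposition of $(2Q)^{c}$ produces $\Lambda_{1}\lesssim\sum_{k\ge1}|2^{k+1}Q|^{-1}\int_{2^{k+1}Q}|f|\,dy$ and $\Lambda_{2}\lesssim\sum_{k\ge1}|2^{k+1}Q|^{-1}\int_{2^{k+1}Q}|b(y)-b_{2Q}|\,|f(y)|\,dy$. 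To bound each inner average, write $|f|=(|f|\nu^{1/p})\cdot\nu^{-1/p}$ and apply the generalized H\"older inequality in Orlicz spaces with the complementary pair $(\bar{\mathcal A},\mathcal A)$ where $\mathcal A(t)=t^{p'}[\log(e+t)]^{p'}$: hypothesis~$(\S\S)$ controls the factor $\|\nu^{-1/p}\|_{\mathcal A,2^{k+1}Q}$, while the complementary factor $\|f\nu^{1/p}\|_{\bar{\mathcal A},2^{k+1}Q}$ is majorised, via a reverse H\"older estimate for $w^{r}$ (coming from $w\in A_\infty$), by the $L^{p}(\nu)$-average of $f$ on $2^{k+1}Q$, hence by $\|f\|_{L^{p,\kappa}(\nu,w)}w(2^{k+1}Q)^{\kappa/p}$. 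For $\Lambda_{2}$ the additional BMO factor is handled by further splitting $|b(y)-b_{2Q}|\le|b(y)-b_{2^{k+1}Q}|+|b_{2^{k+1}Q}-b_{2Q}|$ and invoking a three-factor Orlicz H\"older with an $\exp L$-type norm for $|b(y)-b_{2^{k+1}Q}|$, which via John--Nirenberg gives $C\|b\|_{\mathrm{BMO}}$, plus the well-known bound $|b_{2^{k+1}Q}-b_{2Q}|\le Ck\|b\|_{\mathrm{BMO}}$; the net cost is a polynomial loss $C(k+1)\|b\|_{\mathrm{BMO}}$.

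For the measure of $\{x\in Q:c_{3}|b(x)-b_{2Q}|\Lambda_{1}>\sigma/4\}$ I would apply Chebyshev at exponent $p$ combined with the $A_\infty$-weighted BMO inequality $\int_{Q}|b(x)-b_{2Q}|^{p}w(x)\,dx\le C\|b\|^{p}_{\mathrm{BMO}}w(Q)$, while $\{x\in Q:c_{3}\Lambda_{2}>\sigma/4\}$ is either empty or equal to $Q$, yielding only $w(Q)$ and a relation that lets $\sigma$ be absorbed into the estimate of $\Lambda_{2}$. The main obstacle I anticipate is the final summation over $k$: each term carries a factor $w(2^{k+1}Q)^{\kappa/p}$ that must be balanced against $w(Q)^{\kappa/p}$, and one must use the $A_\infty$-reverse-H\"older comparison $w(Q)/w(2^{k+1}Q)\le C\,2^{-\delta n k}$ (for some $\delta>0$) to outrun the $(k+1)\|b\|_{\mathrm{BMO}}$ loss and leave a convergent geometric series. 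Once this series is summed, the total contribution of $\mathrm{II}$ matches the target bound $C\sigma^{-p}\|f\|^{p}_{L^{p,\kappa}(\nu,w)}w(Q)^{\kappa}$, and the theorem follows.
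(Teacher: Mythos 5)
Your proposal is correct and follows essentially the same route as the paper's proof: decompose $f=f_1+f_2$, dispatch $f_1$ with the weak-type hypothesis and $\Delta_2$, split the BMO factor $b(x)-b(y)$ around a fixed mean (you use $b_{2Q}$, the paper uses $b_Q$, an immaterial choice), control the $|b(x)-b_{\cdot}|$ piece by Chebyshev plus the $A_\infty$-weighted BMO inequality of Lemma \ref{BMO}$(ii)$, control the $|b(y)-b_{\cdot}|$ piece by a further splitting via $b_{2^{k+1}Q}$ together with Orlicz--H\"older, John--Nirenberg, and Lemma \ref{BMO}$(i)$, and sum the resulting series using the $A_\infty$ comparison \eqref{compare}. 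Whether one applies a single three-factor Orlicz H\"older to $|f|\nu^{1/p}\cdot\nu^{-1/p}\cdot|b-b_{2^{k+1}Q}|$, as you suggest, or first applies ordinary $L^p$--$L^{p'}$ H\"older and then a two-factor Orlicz H\"older to $(b-b_{2^{k+1}Q})\nu^{-1/p}$, as the paper does, is a cosmetic rearrangement of the same computation.

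One small slip worth correcting: you attribute the control of $\|\nu^{-1/p}\|_{\mathcal A,2^{k+1}Q}$ and the appearance of $w(2^{k+1}Q)$ to ``a reverse H\"older estimate for $w^r$ (coming from $w\in A_\infty$).'' The exponent $r>1$ is not produced by reverse H\"older for $w$; it is part of the hypothesis $(\S\S)$ and has no a priori relation to $w\in A_\infty$. What is actually used is the elementary H\"older inequality $w(Q')\le|Q'|^{1/r'}\bigl(\int_{Q'}w^r\bigr)^{1/r}$ (the paper's \eqref{U}), which converts the $\bigl(\tfrac{1}{|Q'|}\int_{Q'}w^r\bigr)^{1/(rp)}$ factor in $(\S\S)$ into $(w(Q')/|Q'|)^{1/p}$. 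The $A_\infty$ hypothesis on $w$ enters only through Lemma \ref{BMO}$(ii)$ and through \eqref{compare}, the latter being what gives the geometric decay $w(Q)/w(2^{k+1}Q)\lesssim 2^{-\delta n(k+1)}$ that, as you note, absorbs the $(k+1)\|b\|_*$ loss. Similarly, the bound $\|f\nu^{1/p}\|_{\bar{\mathcal A},Q'}\lesssim\bigl(\tfrac{1}{|Q'|}\int_{Q'}|f|^p\nu\bigr)^{1/p}$ is simply the monotonicity of Luxemburg norms from $\bar{\mathcal A}(t)\lesssim t^p$, again with no role for $A_\infty$. With these attributions corrected, your outline assembles into a complete proof.
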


\begin{theorem}\label{mainthm:4}
Let $1<p<\infty$, $0<\kappa<1$, $b\in\mathrm{BMO}(\mathbb R^n)$ and $[b,\mathcal T_\gamma]$ satisfy the condition $(\ref{frac sublinear commutator})$ with $0<\gamma<n$. Given a pair of weights $(w,\nu)$, suppose that for some $r>1$ and for all cubes $Q$ in $\mathbb R^n$,
\begin{equation*}
\big|Q\big|^{\gamma/n}\cdot\bigg(\frac{1}{|Q|}\int_Q w(x)^r\,dx\bigg)^{1/{(rp)}}\big\|\nu^{-1/p}\big\|_{\mathcal A,Q}\leq C<\infty,\qquad(\S\S')
\end{equation*}
where $\mathcal A(t)=t^{p'}\big[\log(e+t)\big]^{p'}$. Furthermore, we suppose that $[b,\mathcal T_\gamma]$ satisfies the weak-type $(p,p)$ inequality
\begin{equation}\label{assump4.2}
w\big(\big\{x\in\mathbb R^n:\big|[b,\mathcal T_\gamma](f)(x)\big|>\sigma\big\}\big)
\leq \frac{C}{\sigma^p}\int_{\mathbb R^n}|f(x)|^p\nu(x)\,dx,\quad\mbox{for any }~\sigma>0,
\end{equation}
where $C$ does not depend on $f$ and $\sigma>0$. If $w\in A_\infty$, then the commutator operator $[b,\mathcal T_\gamma]$ is bounded from $L^{p,\kappa}(\nu,w)$ into $WL^{p,\kappa}(w)$.
\end{theorem}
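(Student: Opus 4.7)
The plan is to adapt the scheme of Theorem \ref{mainthm:2}, adding two ingredients required by the BMO symbol: a generalized H\"older inequality in the Orlicz space associated with $\mathcal A(t)=t^{p'}[\log(e+t)]^{p'}$, and the weighted John--Nirenberg inequality for $A_\infty$ weights. Fix a cube $Q\subset\mathbb R^n$ and $\sigma>0$, and write $f=f_1+f_2$ with $f_1=f\chi_{2Q}$. Sublinearity of $[b,\mathcal T_\gamma]$ reduces the estimate of $w(\{x\in Q:|[b,\mathcal T_\gamma](f)(x)|>\sigma\})$ to the sum of the analogous pieces for $f_1$ and $f_2$. For $f_1$, assumption \eqref{assump4.2} immediately gives $C\sigma^{-p}\int_{2Q}|f|^p\nu\leq C\sigma^{-p}w(2Q)^\kappa\|f\|_{L^{p,\kappa}(\nu,w)}^p$, and the doubling property of $w\in A_\infty$ converts $w(2Q)^\kappa$ into $w(Q)^\kappa$.

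For the nonlocal part, the pointwise size estimate \eqref{frac sublinear commutator} combined with $|b(x)-b(y)|\leq|b(x)-b_Q|+|b(y)-b_Q|$ yields, for each $x\in Q$,
\[
|[b,\mathcal T_\gamma](f_2)(x)|\leq c_4|b(x)-b_Q|\int_{(2Q)^c}\frac{|f(y)|}{|x-y|^{n-\gamma}}\,dy+c_4\int_{(2Q)^c}\frac{|b(y)-b_Q|\,|f(y)|}{|x-y|^{n-\gamma}}\,dy=:I(x)+II(x).
\]
Splitting $(2Q)^c=\bigcup_{k\geq1}(2^{k+1}Q\setminus 2^kQ)$ and using $|x-y|^{n-\gamma}\gtrsim|2^{k+1}Q|^{(n-\gamma)/n}$ on the $k$-th annulus, I would bound each annular average via the generalized Orlicz--H\"older inequality with the triple $\Phi_1(t)=t^p$, $\Phi_2(t)=e^t-1$, $\Phi_3=\mathcal A$---whose inverses satisfy $\Phi_1^{-1}(t)\Phi_2^{-1}(t)\Phi_3^{-1}(t)\lesssim t$---after writing the integrand as $(|f|\nu^{1/p})(|b-b_{2^{k+1}Q}|)(\nu^{-1/p})$. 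Standard John--Nirenberg provides $\|b-b_{2^{k+1}Q}\|_{\exp L,2^{k+1}Q}\lesssim\|b\|_*$, and $|b_{2^{k+1}Q}-b_Q|\lesssim k\|b\|_*$ contributes a harmless linear factor. Applying next the Morrey estimate $\bigl(\int_{2^{k+1}Q}|f|^p\nu\bigr)^{1/p}\leq w(2^{k+1}Q)^{\kappa/p}\|f\|_{L^{p,\kappa}(\nu,w)}$, the hypothesis $(\S\S')$, and the elementary H\"older bound $w(2^{k+1}Q)\leq|2^{k+1}Q|^{1/r'}(\int_{2^{k+1}Q}w^r)^{1/r}$ cancels all Lebesgue-measure factors and the $|2^{k+1}Q|^{\gamma/n}$ growth, and the $A_\infty$ power inequality $w(Q)/w(2^{k+1}Q)\lesssim 2^{-(k+1)n\delta}$ (combined with $\kappa<1$) then makes the resulting series (with the extra factor $1+k$) summable. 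This yields the uniform bounds $II(x)\lesssim M$ and $I(x)\lesssim|b(x)-b_Q|\,M$ on $Q$, with $M:=\|b\|_*\,w(Q)^{(\kappa-1)/p}\|f\|_{L^{p,\kappa}(\nu,w)}$.

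The $II$-level set is either empty or equal to $Q$; in the nontrivial case $\sigma\lesssim M$ directly gives $\sigma^pw(Q)\lesssim w(Q)^\kappa\|f\|^p$. For $I$, the weighted John--Nirenberg inequality---a consequence of unweighted John--Nirenberg and the $A_\infty$ bound $w(E)/w(Q)\lesssim(|E|/|Q|)^\delta$---gives $w(\{x\in Q:|b(x)-b_Q|>\lambda\})\lesssim w(Q)\exp(-c\lambda/\|b\|_*)$; choosing $\lambda\simeq\sigma/M$ and using $\sup_{t>0}t^pe^{-ct}<\infty$ delivers $\sigma^pw(\{x\in Q:I(x)>\sigma/4\})\lesssim w(Q)^\kappa\|f\|^p$, completing the proof. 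The main technical obstacle lies in the bookkeeping of the three-function Orlicz--H\"older step: $\mathcal A$ is tailored so that $\Phi_3^{-1}(t)\approx t^{1/p'}/\log(e+t)$ cancels the $\log(e+t)=\Phi_2^{-1}(t)$ supplied by the BMO factor to give $t^{1/p'}$, which together with $t^{1/p}=\Phi_1^{-1}(t)$ reproduces $t$; simultaneously the $|Q|^{\gamma/n}$ factor in $(\S\S')$ must exactly compensate for the $|2^{k+1}Q|^{\gamma/n}$ growth of the annular kernels.
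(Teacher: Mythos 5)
Your proposal is correct and rests on the same cube/annulus decomposition and the same use of the Orlicz--H\"older inequality against condition $(\S\S')$, but it diverges from the paper's proof at two points. For the $\widetilde\eta$-piece (your $II$), you invoke a three-function Orlicz--H\"older inequality with the triple $(t^p,\exp(t)-1,\mathcal A)$, while the paper first applies ordinary H\"older with exponent $p$ and then the two-function version (Lemma \ref{three}) to $\big\|(b-b_{2^{j+1}Q})\nu^{-1/p}\big\|_{\mathcal C,2^{j+1}Q}$ with $\mathcal C(t)=t^{p'}$; these two packagings are equivalent up to constants. More substantively, for the $\widetilde\xi$-piece (your $I$, which carries the factor $|b(x)-b_Q|$), the paper applies Chebyshev's inequality directly and then uses Lemma \ref{BMO}$(ii)$, the weighted $L^p$ bound $\big(\int_Q|b-b_Q|^pw\,dx\big)^{1/p}\le C\|b\|_*w(Q)^{1/p}$ valid for $w\in A_\infty$. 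You instead derive a uniform pointwise bound for the annular sum on $Q$, include the superlevel set of $I$ in a superlevel set of $|b(\cdot)-b_Q|$, and then apply a weighted John--Nirenberg estimate $w(\{x\in Q:|b(x)-b_Q|>\lambda\})\le Cw(Q)\exp(-c\lambda/\|b\|_*)$ --- obtained from the unweighted John--Nirenberg inequality together with the $A_\infty$ power inequality \eqref{compare} --- combined with $\sup_{t>0}t^pe^{-ct}<\infty$. Both routes are valid; yours trades the paper's off-the-shelf $L^p$-BMO lemma for an explicit exponential distributional bound, which is slightly longer to set up but makes the role of $w\in A_\infty$ more transparent, and indeed Lemma \ref{BMO}$(ii)$ itself follows from that distributional bound by the layer-cake formula. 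One minor imprecision: after the uniform bound, the set $\{x\in Q: II(x)>\sigma/4\}$ is empty or a \emph{subset} of $Q$, not literally all of $Q$, since $II$ does depend on $x$; the bound $w(\{\cdot\})\le w(Q)$ you invoke is unaffected.
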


Concerning the boundedness properties on weighted amalgam spaces for these operators, we have the following results.

\begin{theorem}\label{mainthm:5}
Let $1<p\leq\alpha<q\leq\infty$ and $\mu\in\Delta_2$. Given a pair of weights $(w,\nu)$, assume that for some $r>1$ and for all cubes $Q$ in $\mathbb R^n$,
\begin{equation*}
\bigg(\frac{1}{|Q|}\int_Q w(x)^r\,dx\bigg)^{1/{(rp)}}\bigg(\frac{1}{|Q|}\int_Q \nu(x)^{-p'/p}\,dx\bigg)^{1/{p'}}\leq C<\infty.\qquad (\S)
\end{equation*}
Furthermore, we assume that $\mathcal T$ satisfies the weak-type $(p,p)$ inequality \eqref{assump1.2}. If $w\in \Delta_2$, then the operator $\mathcal T$ is bounded from $(L^p,L^q)^{\alpha}(\nu,w;\mu)$ into $(WL^p,L^q)^{\alpha}(w;\mu)$.
\end{theorem}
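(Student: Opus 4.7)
The plan is to adapt the Morrey-space argument used for Theorem~\ref{mainthm:1} to get a pointwise estimate on a single cube, and then pass to the amalgam norm by taking the $L^q(\mu)$-norm in the center of the cube and the supremum over its scale. Fix $Q=Q(y_0,\ell)$ and split $f=f_1+f_2$ with $f_1=f\chi_{2Q}$. By sublinearity, $\|\mathcal{T}f\cdot\chi_Q\|_{WL^p(w)}$ breaks into a local piece and a tail piece. The assumed weak-type hypothesis \eqref{assump1.2} immediately gives $\|\mathcal{T}f_1\|_{WL^p(w)}\le C\|f\chi_{2Q}\|_{L^p(\nu)}$.

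For the tail $f_2$ I will invoke the size condition \eqref{sublinear}, slice $(2Q)^c$ into the dyadic annuli $2^{k+1}Q\setminus 2^kQ$, and combine H\"older's inequality with the two-weight condition $(\S)$ together with the standard bound $w(Q')^{1/p}\le|Q'|^{1/p-1/(rp)}\bigl(\int_{Q'}w^r\bigr)^{1/(rp)}$ to reach the $x$-independent pointwise estimate
\[
|\mathcal{T}f_2(x)|\le C\sum_{k=1}^{\infty}\frac{\|f\chi_{2^{k+1}Q}\|_{L^p(\nu)}}{w(2^{k+1}Q)^{1/p}},\qquad x\in Q.
\]
Since the right-hand side is constant in $x$, integrating against $w$ on $Q$ gives $\|\mathcal{T}f_2\cdot\chi_Q\|_{WL^p(w)}\le w(Q)^{1/p}$ times the above sum.

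Next I multiply the resulting two-term estimate by $w(Q(y_0,\ell))^{1/\alpha-1/p-1/q}$ and take the $L^q(\mu)$-norm in $y_0$. The local contribution, after the substitution $\ell\mapsto 2\ell$ and the use of $w\in\Delta_2$ (which makes $w(Q(y_0,\ell))^{1/\alpha-1/p-1/q}$ comparable to $w(Q(y_0,2\ell))^{1/\alpha-1/p-1/q}$ even though this exponent can have either sign), is bounded directly by $\|f\|_{(L^p,L^q)^{\alpha}(\nu,w;\mu)}$ from the very definition of the amalgam norm. The tail contribution has $k$-th summand
\[
\Bigl(\tfrac{w(Q(y_0,\ell))}{w(Q(y_0,2^{k+1}\ell))}\Bigr)^{1/\alpha-1/q}\!w(Q(y_0,2^{k+1}\ell))^{1/\alpha-1/p-1/q}\|f\chi_{Q(y_0,2^{k+1}\ell)}\|_{L^p(\nu)},
\]
whose $L^q(\mu)$-norm in $y_0$ is, setting aside the first ratio, again at most $\|f\|_{(L^p,L^q)^{\alpha}(\nu,w;\mu)}$.

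The main obstacle is producing geometric decay in $k$ from the ratio $w(Q)/w(2^{k+1}Q)$. The strict inequality $\alpha<q$ makes the exponent $1/\alpha-1/q$ strictly positive, and a doubling weight on $\mathbb{R}^n$ automatically satisfies a reverse-doubling inequality: there exist $C,\delta>0$ with $w(Q)/w(2^{k+1}Q)\le C\,2^{-(k+1)\delta}$ uniformly in $Q$. This yields the summable factor $2^{-(k+1)\delta(1/\alpha-1/q)}$ in each term, so Minkowski's inequality in $L^q(\mu)$ collapses the series to $\|f\|_{(L^p,L^q)^{\alpha}(\nu,w;\mu)}$ times a convergent geometric sum. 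Taking $\sup_{\ell>0}$ completes the argument.
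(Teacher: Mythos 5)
Your proposal is correct and follows essentially the same line as the paper's proof: split $f$ on $2Q$, handle the local piece by the weak-type hypothesis, handle the tail by the size condition and dyadic annuli, rewrite the factor $w(Q)^{1/\alpha-1/q}$ as a ratio against $w(2^{k+1}Q)^{1/\alpha-1/q}$, take the $L^q(\mu)$-norm via Minkowski, and sum using reverse doubling of $w$ (which the paper likewise invokes, citing Komori--Shirai). The only presentational difference is that you fold the H\"older estimate and the $A_p$-type condition $(\S)$ into the pointwise bound on $\mathcal T f_2$ first, yielding the compact form $|\mathcal T f_2(x)|\le C\sum_k \|f\chi_{2^{k+1}Q}\|_{L^p(\nu)}/w(2^{k+1}Q)^{1/p}$, whereas the paper applies Chebyshev to pass to $L^p(w)$ and then carries out the H\"older and $(\S)$ manipulations afterward; these are the same computation in a different order, and the resulting $k$-th summand is identical.
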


\begin{theorem}\label{mainthm:6}
Let $0<\gamma<n$, $1<p\leq\alpha<q\leq\infty$ and $\mu\in\Delta_2$. Given a pair of weights $(w,\nu)$, assume that for some $r>1$ and for all cubes $Q$ in $\mathbb R^n$,
\begin{equation*}
\big|Q\big|^{\gamma/n}\cdot\bigg(\frac{1}{|Q|}\int_Q w(x)^r\,dx\bigg)^{1/{(rp)}}\bigg(\frac{1}{|Q|}\int_Q \nu(x)^{-p'/p}\,dx\bigg)^{1/{p'}}\leq C<\infty.\qquad(\S')
\end{equation*}
Furthermore, we assume that $\mathcal T_\gamma$ satisfies the weak-type $(p,p)$ inequality \eqref{assump2.2}. If $w\in \Delta_2$, then the operator $\mathcal T_{\gamma}$ is bounded from $(L^p,L^q)^{\alpha}(\nu,w;\mu)$ into $(WL^p,L^q)^{\alpha}(w;\mu)$.
\end{theorem}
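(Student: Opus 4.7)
The plan is to model the proof on Theorem~\ref{mainthm:2}, first producing a cube-by-cube bound and then lifting it to the amalgam setting. Fix $\ell>0$ and $y\in\mathbb R^n$, and set $Q=Q(y,\ell)$. Decompose $f=f_1+f_2$ with $f_1=f\cdot\chi_{2Q}$ and $f_2=f\cdot\chi_{(2Q)^c}$. The sublinearity of $\mathcal T_\gamma$ gives
\begin{equation*}
\bigl\|\mathcal T_\gamma f\cdot\chi_Q\bigr\|_{WL^p(w)}\le\bigl\|\mathcal T_\gamma f_1\bigr\|_{WL^p(w)}+\bigl\|\mathcal T_\gamma f_2\cdot\chi_Q\bigr\|_{WL^p(w)}=:I(y)+II(y),
\end{equation*}
and the weak-type hypothesis \eqref{assump2.2} immediately yields $I(y)\le C\|f\chi_{2Q}\|_{L^p(\nu)}$.

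For $II(y)$, I would use the size condition \eqref{frac sublinear} and the annular decomposition $(2Q)^c=\bigsqcup_{k\ge 1}(2^{k+1}Q\setminus 2^kQ)$: for $x\in Q$ and $z$ in the $k$-th annulus, $|x-z|\asymp 2^k\ell$. Hölder's inequality with $\nu^{-p'/p}$ then produces, for each $k$, a factor $\|f\chi_{2^{k+1}Q}\|_{L^p(\nu)}\cdot\bigl(\int_{2^{k+1}Q}\nu^{-p'/p}\bigr)^{1/p'}$. Applying $(\S')$ to the cube $2^{k+1}Q$ and using the power-mean estimate $\bigl(\int_{2^{k+1}Q}w^r\bigr)^{1/r}\ge w(2^{k+1}Q)|2^{k+1}Q|^{-1/r'}$ to convert the $L^r$-average of $w$ into a power of $w(2^{k+1}Q)$, after careful bookkeeping (the $|Q|^{\gamma/n}$-factor in $(\S')$ cancels the fractional scale $(2^k\ell)^{\gamma-n}$ of the kernel), I arrive at the $x$-independent pointwise bound
\begin{equation*}
\bigl|\mathcal T_\gamma f_2(x)\bigr|\le C\sum_{k=1}^\infty\frac{\|f\chi_{2^{k+1}Q}\|_{L^p(\nu)}}{w(2^{k+1}Q)^{1/p}},\qquad x\in Q.
\end{equation*}
Since this is constant on $Q$, $II(y)\le C\cdot w(Q)^{1/p}\sum_{k\ge 1}w(2^{k+1}Q)^{-1/p}\|f\chi_{2^{k+1}Q}\|_{L^p(\nu)}$.

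It remains to multiply by $w(Q(y,\ell))^{1/\alpha-1/p-1/q}$ and take the $L^q(\mu)$-norm in $y$. For $I$, $w\in\Delta_2$ gives $w(Q(y,\ell))\asymp w(Q(y,2\ell))$, so $\|I\|_{L^q(\mu)}$ is bounded by the amalgam norm at scale $2\ell$, which in turn is at most $\|f\|_{(L^p,L^q)^\alpha(\nu,w;\mu)}$ since the amalgam norm is a supremum over all scales. For $II$, I rewrite
\begin{equation*}
\frac{w(Q(y,\ell))^{1/\alpha-1/q}}{w(Q(y,2^{k+1}\ell))^{1/p}}=\Bigl(\frac{w(Q(y,\ell))}{w(Q(y,2^{k+1}\ell))}\Bigr)^{1/\alpha-1/q}w(Q(y,2^{k+1}\ell))^{1/\alpha-1/p-1/q},
\end{equation*}
and use the reverse-doubling consequence of $w\in\Delta_2$ on $\mathbb R^n$ to dominate the first factor by $\rho^{-k(1/\alpha-1/q)}$ for some $\rho>1$. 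Because the strict inequality $\alpha<q$ makes $1/\alpha-1/q>0$, Minkowski's inequality in $L^q(\mu)$ (using $q>1$) then reduces $\|II\|_{L^q(\mu)}$ to a convergent geometric series whose sum is $C\|f\|_{(L^p,L^q)^\alpha(\nu,w;\mu)}$. Taking supremum over $\ell>0$ completes the proof. The main hurdle is the dyadic estimate producing the $x$-independent pointwise bound for $II$: the dimensional powers of $|2^{k+1}Q|$, $w(2^{k+1}Q)$ and the $\nu^{-p'/p}$-integral must cancel precisely, and this is exactly what the extra $|Q|^{\gamma/n}$ in $(\S')$ is designed to achieve. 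Once this pointwise bound is secured, the passage to the amalgam norm via Minkowski and reverse doubling is essentially mechanical.
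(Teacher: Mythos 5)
Your proposal is correct and follows essentially the same route as the paper's proof of Theorem~\ref{mainthm:6}: annular decomposition of $f_2$, the size condition for the pointwise bound, H\"older with $\nu^{-p'/p}$ on each ring, the Jensen-type estimate $\bigl(\frac{1}{|2^{k+1}Q|}\int_{2^{k+1}Q}w^r\bigr)^{1/r}\ge w(2^{k+1}Q)/|2^{k+1}Q|$ paired with $(\S')$ to cancel all the $|2^{k+1}Q|$-powers, reverse doubling from $w\in\Delta_2$ plus $1/\alpha-1/q>0$ to sum the series, and Minkowski's inequality in $L^q(\mu)$ to lift the cube-by-cube bound to the amalgam norm. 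The only cosmetic difference is that you assemble the $x$-independent pointwise bound on $\mathcal T_\gamma f_2$ before taking the weak $L^p(w)$-norm, whereas the paper first passes to the strong $L^p(w)$-norm via Chebyshev and then inserts the same chain of inequalities; the ingredients and cancellations are identical.
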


\begin{theorem}\label{mainthm:7}
Let $1<p\leq\alpha<q\leq\infty$, $\mu\in\Delta_2$ and $b\in \mathrm{BMO}(\mathbb R^n)$. Given a pair of weights $(w,\nu)$, assume that for some $r>1$ and for all cubes $Q$ in $\mathbb R^n$,
\begin{equation*}
\left(\frac{1}{|Q|}\int_Q w(x)^r\,dx\right)^{1/{(rp)}}\big\|\nu^{-1/p}\big\|_{\mathcal A,Q}\leq C<\infty,\qquad(\S\S)
\end{equation*}
where $\mathcal A(t)=t^{p'}\big[\log(e+t)\big]^{p'}$. Furthermore, we assume that $[b,\mathcal T]$ satisfies the weak-type $(p,p)$ inequality \eqref{assump3.2}. If $w\in A_\infty$, then the commutator operator $[b,\mathcal T]$ is bounded from $(L^p,L^q)^{\alpha}(\nu,w;\mu)$ into $(WL^p,L^q)^{\alpha}(w;\mu)$.
\end{theorem}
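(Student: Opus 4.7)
The plan is to adapt the Morrey-space argument of Theorem~\ref{mainthm:3} to the amalgam setting by adding an outer $L^q(\mu)$-integration over the centers of cubes. Fix a scale $\ell>0$ and a point $y_0\in\mathbb R^n$; write $Q=Q(y_0,\ell)$. Decompose $f=f_1+f_2$ with $f_1=f\chi_{2Q}$, so by sublinearity
\begin{equation*}
\bigl\|[b,\mathcal T](f)\chi_Q\bigr\|_{WL^p(w)}\leq\bigl\|[b,\mathcal T](f_1)\chi_Q\bigr\|_{WL^p(w)}+\bigl\|[b,\mathcal T](f_2)\chi_Q\bigr\|_{WL^p(w)}.
\end{equation*}
The local piece is handled immediately by the weak-type hypothesis \eqref{assump3.2}, giving the bound $C\|f\chi_{2Q}\|_{L^p(\nu)}$.

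For the global piece, invoke the pointwise commutator bound \eqref{sublinear commutator} with the annular decomposition $(2Q)^c=\bigcup_{k\geq 1}(2^{k+1}Q\setminus 2^k Q)$ and the splitting $|b(x)-b(y)|\leq|b(x)-b_Q|+|b(y)-b_Q|$. On each annulus the key tool is the generalized Orlicz--H\"older inequality combined with the P\'erez-type estimate
\begin{equation*}
\left(\frac{1}{|Q|}\int_Q|b(y)-b_Q|^{p'}\nu(y)^{-p'/p}\,dy\right)^{1/p'}\leq C\|b\|_{\mathrm{BMO}}\,\|\nu^{-1/p}\|_{\mathcal A,Q},
\end{equation*}
which gives, for $x\in Q$,
\begin{equation*}
\bigl|[b,\mathcal T](f_2)(x)\bigr|\lesssim\|b\|_{\mathrm{BMO}}\sum_{k=1}^{\infty}\bigl(|b(x)-b_Q|+k\bigr)\frac{\|\nu^{-1/p}\|_{\mathcal A,2^{k+1}Q}}{|2^{k+1}Q|^{1/p}}\bigl\|f\chi_{2^{k+1}Q}\bigr\|_{L^p(\nu)},
\end{equation*}
where $k$ comes from the telescoping bound $|b_{2^{k+1}Q}-b_Q|\lesssim k\|b\|_{\mathrm{BMO}}$. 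Dominate weak $L^p(w)$ by strong $L^p(w)$ on $Q$, use the $A_\infty$ estimate $\int_Q|b-b_Q|^p w\,dx\lesssim\|b\|_{\mathrm{BMO}}^p w(Q)$, and collapse the coefficient via Jensen's inequality $(\tfrac{1}{|Q|}\int_Q w^r)^{1/(rp)}\geq(w(Q)/|Q|)^{1/p}$ combined with $(\S\S)$ to obtain $|2^{k+1}Q|^{-1/p}\|\nu^{-1/p}\|_{\mathcal A,2^{k+1}Q}\leq C\,w(2^{k+1}Q)^{-1/p}$. This produces
\begin{equation*}
\bigl\|[b,\mathcal T](f)\chi_Q\bigr\|_{WL^p(w)}\lesssim\|b\|_{\mathrm{BMO}}\sum_{k=0}^{\infty}(1+k)\,\frac{w(Q)^{1/p}}{w(2^{k+1}Q)^{1/p}}\,\bigl\|f\chi_{2^{k+1}Q}\bigr\|_{L^p(\nu)},
\end{equation*}
with the $k=0$ summand absorbing the local piece.

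Multiply both sides by $w(Q(y_0,\ell))^{1/\alpha-1/p-1/q}$ and take the $L^q(\mu)$-norm in $y_0$. By Minkowski's inequality (since $q\geq 1$), the $k$-sum pulls outside, and each summand becomes
\begin{equation*}
(1+k)\left(\frac{w(Q(y_0,\ell))}{w(Q(y_0,2^{k+1}\ell))}\right)^{1/\alpha-1/q}\bigl\|w(Q(y_0,2^{k+1}\ell))^{1/\alpha-1/p-1/q}\|f\chi_{Q(y_0,2^{k+1}\ell)}\|_{L^p(\nu)}\bigr\|_{L^q(\mu)}.
\end{equation*}
The second factor is $\leq\|f\|_{(L^p,L^q)^\alpha(\nu,w;\mu)}$ by definition; for the first, note $1/\alpha-1/q>0$ (since $\alpha<q$) and apply the reverse doubling property of $w\in A_\infty$, which yields $w(Q(y_0,\ell))/w(Q(y_0,2^{k+1}\ell))\leq C\,2^{-(k+1)n\delta}$ for some $\delta=\delta(w)>0$ uniformly in $y_0,\ell$. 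The resulting series $\sum_{k\geq 0}(1+k)\,2^{-(k+1)n\delta(1/\alpha-1/q)}$ converges, and taking $\sup_{\ell>0}$ concludes the argument. The main technical obstacle is executing the Orlicz--H\"older/John--Nirenberg step cleanly so that the combination with $(\S\S)$ and Jensen produces the $w(2^{k+1}Q)^{-1/p}$ coefficient, together with ensuring that the $(1+k)$ polynomial growth coming from the commutator is dominated by the reverse-doubling decay of $w$ --- this is precisely why the strict inequality $\alpha<q$ is essential to the hypothesis.
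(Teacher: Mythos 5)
Your proposal follows essentially the same route as the paper's proof: split $f=f_1+f_2$ at scale $2Q$, handle the local piece by the weak-type hypothesis \eqref{assump3.2}, and for the tail split $|b(x)-b(y)|$ via $b_Q$ and $b_{2^{k+1}Q}$, invoking Lemma \ref{BMO}(ii) for the $|b(x)-b_Q|$ factor, the Orlicz--H\"older inequality with John--Nirenberg (i.e.\ \eqref{key}/\eqref{final}) for $|b(y)-b_{2^{k+1}Q}|$, and Lemma \ref{BMO}(i) for the mean-oscillation gap, then close with the $A_p$-type condition $(\S\S)$ plus the Jensen/H\"older step \eqref{U2}, Minkowski, and the $A_\infty$ decay \eqref{compare} to sum $(1+k)$ against the geometric tail. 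The only blemish is cosmetic: in your displayed pointwise bound the prefactor $\|b\|_{\mathrm{BMO}}$ should not multiply the $|b(x)-b_Q|$ term (it should read $|b(x)-b_Q|+\|b\|_*(1+k)$), but this does not affect the subsequent estimate after the $L^p(w)$ integration over $Q$.
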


\begin{theorem}\label{mainthm:8}
Let $0<\gamma<n$, $1<p\leq\alpha<q\leq\infty$, $\mu\in\Delta_2$ and $b\in\mathrm{BMO}(\mathbb R^n)$. Given a pair of weights $(w,\nu)$, assume that for some $r>1$ and for all cubes $Q$ in $\mathbb R^n$,
\begin{equation*}
\big|Q\big|^{\gamma/n}\cdot\bigg(\frac{1}{|Q|}\int_Q w(x)^r\,dx\bigg)^{1/{(rp)}}\big\|\nu^{-1/p}\big\|_{\mathcal A,Q}\leq C<\infty,\qquad(\S\S')
\end{equation*}
where $\mathcal A(t)=t^{p'}\big[\log(e+t)\big]^{p'}$. Furthermore, we assume that $[b,\mathcal T_\gamma]$ satisfies the weak-type $(p,p)$ inequality \eqref{assump4.2}. If $w\in A_\infty$, then the commutator operator $[b,\mathcal T_{\gamma}]$ is bounded from $(L^p,L^q)^{\alpha}(\nu,w;\mu)$ into $(WL^p,L^q)^{\alpha}(w;\mu)$.
\end{theorem}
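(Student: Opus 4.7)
\textbf{Proof plan for Theorem \ref{mainthm:8}.} The strategy is to transplant the commutator--fractional argument of Theorem \ref{mainthm:4} into the amalgam setting, the extra ingredient being an $L^q(\mu)$ integration at the end. Fix $\ell>0$ and $y\in\mathbb R^n$, set $Q=Q(y,\ell)$, and decompose $f=f_1+f_2$ with $f_1=f\chi_{2Q}$ and $f_2=f\chi_{\mathbb R^n\setminus 2Q}$. Sublinearity of the weak $L^p$ quasi-norm gives
\begin{equation*}
\bigl\|[b,\mathcal T_\gamma]f\cdot\chi_Q\bigr\|_{WL^p(w)}\le\bigl\|[b,\mathcal T_\gamma]f_1\cdot\chi_Q\bigr\|_{WL^p(w)}+\bigl\|[b,\mathcal T_\gamma]f_2\cdot\chi_Q\bigr\|_{WL^p(w)},
\end{equation*}
and the local term is controlled directly by \eqref{assump4.2} as $C\|f\chi_{2Q}\|_{L^p(\nu)}$.

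For the global term I use the pointwise size estimate \eqref{frac sublinear commutator}, decompose $\mathbb R^n\setminus 2Q=\bigcup_{k\ge1}(2^{k+1}Q\setminus 2^kQ)$, and use $|x-z|\sim 2^k\ell$ on the $k$-th annulus together with $|b(x)-b(z)|\le|b(x)-b_{2^{k+1}Q}|+|b(z)-b_{2^{k+1}Q}|$ to obtain, for $x\in Q$, the pointwise bound
\begin{equation*}
\bigl|[b,\mathcal T_\gamma]f_2(x)\bigr|\lesssim\sum_{k\ge1}\bigl|2^{k+1}Q\bigr|^{\gamma/n-1}\!\Bigl(\bigl|b(x)-b_{2^{k+1}Q}\bigr|\!\int_{2^{k+1}Q}\!|f|+\int_{2^{k+1}Q}\!\!\bigl|b(z)-b_{2^{k+1}Q}\bigr||f(z)|\,dz\Bigr).
\end{equation*}
Taking the $L^p(Q,w)$-norm (which dominates the $WL^p(Q,w)$-norm), the first piece is handled using the $A_\infty$--BMO moment estimate $\|b-b_{2^{k+1}Q}\|_{L^p(Q,w)}\lesssim(k+1)\|b\|_{\mathrm{BMO}}w(Q)^{1/p}$ followed by an Orlicz--Hölder inequality pairing $\nu^{-1/p}$ (controlled by the $\mathcal A$-Orlicz norm in $(\S\S')$) against $f\nu^{1/p}\in L^p$. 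The second piece is handled by the generalized Orlicz--Hölder with $b-b_{2^{k+1}Q}\in\exp L$ (of BMO-norm), $f\nu^{1/p}\in L^p$ and $\nu^{-1/p}\in L^{\mathcal A}$, whose complementarity is arranged by the very choice $\mathcal A(t)=t^{p'}[\log(e+t)]^{p'}$. In both cases the factor $|2^{k+1}Q|^{\gamma/n}$ is absorbed by the $|Q|^{\gamma/n}$ in $(\S\S')$, and the reverse-Hölder inequality for $A_\infty$ (to pass between $w^r$ and $w$) combined with the spatial averaging provides a geometric decay $2^{-k\varepsilon}$ for some $\varepsilon>0$.

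After combining the two pieces and multiplying by $w(Q)^{1/\alpha-1/p-1/q}$, a $\Delta_2$ comparison of $w(Q)$ with $w(2^{k+1}Q)$ yields
\begin{equation*}
w(Q)^{1/\alpha-1/p-1/q}\bigl\|[b,\mathcal T_\gamma]f\cdot\chi_Q\bigr\|_{WL^p(w)}\lesssim\sum_{k\ge0}(k+1)2^{-k\varepsilon'}w\bigl(2^{k+1}Q\bigr)^{1/\alpha-1/p-1/q}\bigl\|f\chi_{2^{k+1}Q}\bigr\|_{L^p(\nu)},
\end{equation*}
with some $\varepsilon'>0$ surviving the $\Delta_2$ loss. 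I then take the $L^q(\mu)$-norm in $y$, pass the sum outside by Minkowski's integral inequality, observe that each summand is $\le\|f\|_{(L^p,L^q)^{\alpha}(\nu,w;\mu)}$ (after rewriting $2^{k+1}Q(y,\ell)=Q(y,2^{k+1}\ell)$ and using $\mu\in\Delta_2$ to reconcile $\mu$-averages at dilated scales), and sum the resulting convergent series. The main obstacle is the Orlicz bookkeeping at each annulus: one must verify that the $(k+1)$ logarithmic growth coming from BMO is uniformly dominated by the decay arising from $(\S\S')$ and the $\Delta_2$ comparisons, and that the triple pairing $(\exp L, L^p, L^{\mathcal A})$ is properly realized on each cube $2^{k+1}Q$ so that $(\S\S')$ can be invoked scale-by-scale without losing a non-summable factor.
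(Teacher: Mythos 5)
Your approach is essentially the same as the paper's proof: decompose $f=f_1+f_2$, control the local piece via \eqref{assump4.2}, expand $f_2$ over the annuli $2^{k+1}Q\setminus 2^kQ$, split the BMO factor, invoke Lemma~\ref{BMO} and the O'Neil/Orlicz--H\"older inequality with the Young functions $\mathcal A(t)\approx t^{p'}[\log(e+t)]^{p'}$, $\mathcal B(t)\approx e^t-1$, $\mathcal C(t)=t^{p'}$, and obtain geometric decay in $k$ from the $A_\infty$ characterization \eqref{compare} to dominate the $(k+1)$ growth, finally taking the $L^q(\mu)$-norm in $y$ and passing the sum out by Minkowski. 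The only cosmetic difference is that you compare $b$ to $b_{2^{k+1}Q}$ on each annulus whereas the paper subtracts $b_{Q(y,\ell)}$ once and later splits off $|b_{2^{j+1}Q}-b_Q|$; and one phrase should be adjusted: the step linking $w$ and $w^r$ is ordinary H\"older's inequality, $w(2^{k+1}Q)\le|2^{k+1}Q|^{1/r'}\bigl(\int_{2^{k+1}Q}w^r\bigr)^{1/r}$, invoked because $(\S\S')$ already carries the $w^r$ factor, while the $2^{-k\varepsilon}$ decay is supplied directly by \eqref{compare}, not a reverse-H\"older passage (nor is $\mu\in\Delta_2$ actually needed to pass to the dilated scale $Q(y,2^{k+1}\sqrt n\,\ell)$, since the center $y$ is unchanged and the amalgam norm involves a supremum over all scales).
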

\newtheorem{remark}[theorem]{Remark}
\begin{remark}
It should be pointed out that the conclusions of our main theorems are natural generalizations of the corresponding weak-type estimates on the weighted Lebesgue spaces. The operators satisfying the assumptions of the above theorems include Calder\'on--Zygmund operators, Bochner--Riesz operators, and fractional integral operators. Hence, we are able to apply our main theorems to these classical integral operators.
\end{remark}

\section{Notations and definitions}

\subsection{Weights}
A non-negative function $w$ defined on $\mathbb R^n$ will be called a weight if it is locally integrable. $Q(x_0,\ell)$ will denote the cube centered at $x_0$ and has side length $\ell>0$, all cubes are assumed to have their sides parallel to the coordinate axes. Given a cube $Q=Q(x_0,\ell)$ and $\lambda>0$, $\lambda Q$ stands for the cube concentric with $Q$ and having side length $\lambda\sqrt{n}$ times as long, i.e., $\lambda Q(x_0,\ell):=Q(x_0,\lambda\sqrt{n}\ell)$. For any given weight $w$ and any Lebesgue measurable set $E$ of $\mathbb R^n$, we denote the characteristic function of $E$ by $\chi_E$, the Lebesgue measure of $E$ by $|E|$ and the weighted measure of $E$ by $w(E)$, where $w(E):=\displaystyle\int_E w(x)\,dx$. We also denote $E^c:=\mathbb R^n\backslash E$ the complement of $E$. Given a weight $w$, we say that $w$ satisfies the \emph{doubling condition}, if there exists a universal constant $C>0$ such that for any cube $Q\subset\mathbb R^n$, we have
\begin{equation}\label{weights}
w(2Q)\leq C\cdot w(Q).
\end{equation}
When $w$ satisfies this condition \eqref{weights}, we denote $w\in\Delta_2$ for brevity. A weight $w$ is said to belong to the Muckenhoupt's class $A_p$ for $1<p<\infty$, if there exists a constant $C>0$ such that
\begin{equation*}
\left(\frac1{|Q|}\int_Q w(x)\,dx\right)^{1/p}\left(\frac1{|Q|}\int_Q w(x)^{-p'/p}\,dx\right)^{1/{p'}}\leq C
\end{equation*}
holds for every cube $Q\subset\mathbb R^n$. The class $A_{\infty}$ is defined as the union of the $A_p$ classes for $1<p<\infty$, i.e., $A_\infty=\bigcup_{1<p<\infty}A_p$. If $w$ is an $A_{\infty}$ weight, then we have $w\in\Delta_2$ (see \cite{garcia}).
Moreover, this class $A_\infty$ is characterized as the class of all weights satisfying the following property: there exists a number $\delta>0$ and a finite constant $C>0$ such that (see \cite{garcia})
\begin{equation}\label{compare}
\frac{w(E)}{w(Q)}\le C\left(\frac{|E|}{|Q|}\right)^\delta
\end{equation}
holds for every cube $Q\subset\mathbb R^n$ and all measurable subsets $E$ of $Q$. Given a weight $w$ on $\mathbb R^n$ and for $1\leq p<\infty$, the weighted Lebesgue space $L^p(w)$ is defined as the set of all measurable functions $f$ such that
\begin{equation*}
\big\|f\big\|_{L^p(w)}:=\bigg(\int_{\mathbb R^n}|f(x)|^pw(x)\,dx\bigg)^{1/p}<\infty.
\end{equation*}
We also define the weighted weak Lebesgue space $WL^p(w)$ ($1\leq p<\infty$) as the set of all measurable functions $f$ satisfying
\begin{equation*}
\big\|f\big\|_{WL^p(w)}:=
\sup_{\sigma>0}\sigma\cdot\Big[w\big(\big\{x\in\mathbb R^n:|f(x)|>\sigma\big\}\big)\Big]^{1/p}<\infty.
\end{equation*}

\subsection{Orlicz spaces and BMO}
We next recall some basic facts about Orlicz spaces needed for the proofs of the main results. For further details, we refer the reader to \cite{rao}. A function $\mathcal A:[0,+\infty)\rightarrow[0,+\infty)$ is said to be a Young function if it is continuous, convex and strictly increasing satisfying $\mathcal A(0)=0$ and $\mathcal A(t)\to +\infty$ as $t\to +\infty$. Given a Young function $\mathcal A$, we define the $\mathcal A$-average of a function $f$ over a cube $Q$ by means of the following Luxemburg norm:
\begin{equation*}
\big\|f\big\|_{\mathcal A,Q}
:=\inf\left\{\lambda>0:\frac{1}{|Q|}\int_Q\mathcal A\left(\frac{|f(x)|}{\lambda}\right)dx\leq1\right\}.
\end{equation*}
In particular, when $\mathcal A(t)=t^p$, $1<p<\infty$, it is easy to check that
\begin{equation}\label{norm}
\big\|f\big\|_{\mathcal A,Q}=\left(\frac{1}{|Q|}\int_Q|f(x)|^p\,dx\right)^{1/p};
\end{equation}
that is, the Luxemburg norm coincides with the normalized $L^p$ norm. The main examples that we are going to consider are $\mathcal A(t)=t^p\big[\log(e+t)\big]^p$ with $1<p<\infty$.

Let us now recall the definition of the space of $\mathrm{BMO}(\mathbb R^n)$ (see \cite{john}). A locally integrable function $b$ is said to be in $\mathrm{BMO}(\mathbb R^n)$, if
\begin{equation*}
\|b\|_*:=\sup_{Q\subset\mathbb R^n}\frac{1}{|Q|}\int_Q|b(x)-b_Q|\,dx<\infty,
\end{equation*}
where $b_Q$ denotes the mean value of $b$ over $Q$, namely,
\begin{equation*}
b_Q:=\frac{1}{|Q|}\int_Q b(y)\,dy
\end{equation*}
and the supremum is taken over all cubes $Q$ in $\mathbb R^n$. Modulo constants, the space $\mathrm{BMO}(\mathbb R^n)$ is a Banach space with respect to the norm $\|\cdot\|_*$.

Throughout this paper $C$ always denotes a positive constant independent of the main parameters involved, but it may be different from line to line. We will use $c_1,\dots, c_4$ appearing in the first section of this paper to denote certain constants. We also use $A\approx B$ to denote the equivalence of $A$ and $B$; that is, there exist two positive constants $C_1$, $C_2$ independent of $A$, $B$ such that $C_1 A\leq B\leq C_2 A$.

\section{Proofs of Theorems \ref{mainthm:1} and \ref{mainthm:2}}

\begin{proof}[Proof of Theorem $\ref{mainthm:1}$]
Let $f\in L^{p,\kappa}(\nu,w)$ with $1<p<\infty$ and $0<\kappa<1$. For an arbitrary fixed cube $Q=Q(x_0,\ell)\subset\mathbb R^n$, we set $2Q:=Q(x_0,2\sqrt{n}\ell)$. Decompose $f$ as
\begin{equation*}
\begin{cases}
f=f_1+f_2\in L^{p,\kappa}(\nu,w);\  &\\
f_1=f\cdot\chi_{2Q};\  &\\
f_2=f\cdot\chi_{(2Q)^c},
\end{cases}
\end{equation*}
where $\chi_{E}$ denotes the characteristic function of the set $E$. Then for any given $\sigma>0$, we write
\begin{equation*}
\begin{split}
&\frac{1}{w(Q)^{\kappa/p}}\sigma\cdot \Big[w\big(\big\{x\in Q:\big|\mathcal T(f)(x)\big|>\sigma\big\}\big)\Big]^{1/p}\\
\leq &\frac{1}{w(Q)^{\kappa/p}}\sigma\cdot \Big[w\big(\big\{x\in Q:\big|\mathcal T(f_1)(x)\big|>\sigma/2\big\}\big)\Big]^{1/p}\\
&+\frac{1}{w(Q)^{\kappa/p}}\sigma\cdot \Big[w\big(\big\{x\in Q:\big|\mathcal T(f_2)(x)\big|>\sigma/2\big\}\big)\Big]^{1/p}\\
:=&I_1+I_2.
\end{split}
\end{equation*}
We first consider the term $I_1$. Using the assumption \eqref{assump1.2} and the condition $w\in\Delta_2$, we get
\begin{equation*}
\begin{split}
I_1&\leq C\cdot\frac{1}{w(Q)^{\kappa/p}}\bigg(\int_{\mathbb R^n}|f_1(x)|^p\nu(x)\,dx\bigg)^{1/p}\\
&=C\cdot\frac{1}{w(Q)^{\kappa/p}}\bigg(\int_{2Q}|f(x)|^p\nu(x)\,dx\bigg)^{1/p}\\
&\leq C\big\|f\big\|_{L^{p,\kappa}(\nu,w)}\cdot\frac{w(2Q)^{\kappa/p}}{w(Q)^{\kappa/p}}\\
&\leq C\big\|f\big\|_{L^{p,\kappa}(\nu,w)}.
\end{split}
\end{equation*}
This is just our desired estimate. Let us estimate the second term $I_2$. To this end, we observe that when $x\in Q$ and $y\in(2Q)^c$, one has $|x-y|\approx|x_0-y|$. We then decompose $\mathbb R^n$ into a geometrically increasing sequence of concentric cubes, and obtain the following pointwise estimate by the condition \eqref{sublinear}.
\begin{align}\label{pointwise1}
\big|\mathcal T(f_2)(x)\big|&\leq c_1\int_{\mathbb R^n}\frac{|f_2(y)|}{|x-y|^n}dy
\leq C\int_{(2Q)^c}\frac{|f(y)|}{|x_0-y|^n}dy\notag\\
&=C\sum_{j=1}^\infty\int_{2^{j+1}Q\backslash 2^{j}Q}\frac{|f(y)|}{|x_0-y|^n}dy\notag\\
&\leq C\sum_{j=1}^\infty\frac{1}{|2^{j+1}Q|}\int_{2^{j+1}Q}|f(y)|\,dy.
\end{align}
This pointwise estimate \eqref{pointwise1} together with Chebyshev's inequality implies that
\begin{equation*}
\begin{split}
I_2&\leq \frac{2}{w(Q)^{\kappa/p}}\cdot\left(\int_Q\big|\mathcal T(f_2)(x)\big|^pw(x)\,dx\right)^{1/p}\\
&\leq C\cdot w(Q)^{{(1-\kappa)}/p}\sum_{j=1}^\infty\frac{1}{|2^{j+1}Q|}\int_{2^{j+1}Q}|f(y)|\,dy.
\end{split}
\end{equation*}
It follows directly from H\"older's inequality with exponent $p>1$ that
\begin{equation*}
\begin{split}
I_2&\leq C\cdot w(Q)^{{(1-\kappa)}/p}
\sum_{j=1}^\infty\frac{1}{|2^{j+1}Q|}\left(\int_{2^{j+1}Q}|f(y)|^p\nu(y)\,dy\right)^{1/p}\\
&\times\left(\int_{2^{j+1}Q}\nu(y)^{-p'/p}\,dy\right)^{1/{p'}}\\
&\leq C\big\|f\big\|_{L^{p,\kappa}(\nu,w)}\cdot w(Q)^{{(1-\kappa)}/p}\sum_{j=1}^\infty\frac{w(2^{j+1}Q)^{\kappa/p}}{|2^{j+1}Q|}
\times\left(\int_{2^{j+1}Q}\nu(y)^{-p'/p}\,dy\right)^{1/{p'}}\\
&=C\big\|f\big\|_{L^{p,\kappa}(\nu,w)}\sum_{j=1}^\infty\frac{w(Q)^{{(1-\kappa)}/p}}{w(2^{j+1}Q)^{{(1-\kappa)}/p}}
\cdot\frac{w(2^{j+1}Q)^{1/p}}{|2^{j+1}Q|}
\times\left(\int_{2^{j+1}Q}\nu(y)^{-p'/p}\,dy\right)^{1/{p'}}.
\end{split}
\end{equation*}
Moreover, for any positive integer $j$, we apply H\"older's inequality again with exponent $r>1$ to get
\begin{align}\label{U}
w\big(2^{j+1}Q\big)&=\int_{2^{j+1}Q}w(y)\,dy\notag\\
&\leq\big|2^{j+1}Q\big|^{1/{r'}}\bigg(\int_{2^{j+1}Q}w(y)^r\,dy\bigg)^{1/r}.
\end{align}
Thus, in view of \eqref{U}, we conclude that
\begin{equation*}
\begin{split}
I_2&\leq C\big\|f\big\|_{L^{p,\kappa}(\nu,w)}\sum_{j=1}^\infty\frac{w(Q)^{{(1-\kappa)}/p}}{w(2^{j+1}Q)^{{(1-\kappa)}/p}}
\cdot\frac{|2^{j+1}Q|^{1/{(r'p)}}}{|2^{j+1}Q|}\\
&\times\left(\int_{2^{j+1}Q}w(y)^r\,dy\right)^{1/{(rp)}}\left(\int_{2^{j+1}Q}\nu(y)^{-p'/p}\,dy\right)^{1/{p'}}\\
&\leq C\big\|f\big\|_{L^{p,\kappa}(\nu,w)}
\times\sum_{j=1}^\infty\frac{w(Q)^{{(1-\kappa)}/p}}{w(2^{j+1}Q)^{{(1-\kappa)}/p}}.
\end{split}
\end{equation*}
The last inequality is obtained by the $A_p$-type condition ($\S$) on $(w,\nu)$. Since $w\in\Delta_2$, we can easily check that there exists a \emph{reverse doubling constant} $D=D(w)>1$ independent of $Q$ such that (see \cite[Lemma 4.1]{komori})
\begin{equation*}
w(2Q)\geq D\cdot w(Q), \quad \mbox{for all cubes }\,Q\subset\mathbb R^n,
\end{equation*}
which implies that for any positive integer $j$,
\begin{equation}\label{iteration}
w\big(2^{j+1}Q\big)\geq D^{j+1}\cdot w(Q)
\end{equation}
by iteration. Hence,
\begin{align}\label{C1}
\sum_{j=1}^\infty\frac{w(Q)^{{(1-\kappa)}/p}}{w(2^{j+1}Q)^{{(1-\kappa)}/p}}
&\leq \sum_{j=1}^\infty\left(\frac{w(Q)}{D^{j+1}\cdot w(Q)}\right)^{{(1-\kappa)}/p}\notag\\
&=\sum_{j=1}^\infty\left(\frac{1}{D^{j+1}}\right)^{{(1-\kappa)}/p}\notag\\
&\leq C,
\end{align}
where the last series is convergent since the \emph{reverse doubling constant} $D>1$ and $0<\kappa<1$. This yields our desired estimate
\begin{equation*}
I_2\leq C\big\|f\big\|_{L^{p,\kappa}(\nu,w)}.
\end{equation*}
Summing up the above estimates for $I_1$ and $I_2$, and then taking the supremum over all cubes $Q\subset\mathbb R^n$ and all $\sigma>0$, we finish the proof of Theorem \ref{mainthm:1}.
\end{proof}

\begin{proof}[Proof of Theorem $\ref{mainthm:2}$]
Let $f\in L^{p,\kappa}(\nu,w)$ with $1<p<\infty$ and $0<\kappa<1$. For an arbitrary fixed cube $Q=Q(x_0,\ell)$ in $\mathbb R^n$, we decompose $f$ as
\begin{equation*}
\begin{cases}
f=f_1+f_2\in L^{p,\kappa}(\nu,w);\  &\\
f_1=f\cdot\chi_{2Q};\  &\\
f_2=f\cdot\chi_{(2Q)^c},
\end{cases}
\end{equation*}
where $2Q:=Q(x_0,2\sqrt{n}\ell)$. For any given $\sigma>0$, we then write
\begin{equation*}
\begin{split}
&\frac{1}{w(Q)^{\kappa/p}}\sigma\cdot \Big[w\big(\big\{x\in Q:\big|\mathcal T_\gamma(f)(x)\big|>\sigma\big\}\big)\Big]^{1/p}\\
\leq &\frac{1}{w(Q)^{\kappa/p}}\sigma\cdot \Big[w\big(\big\{x\in Q:\big|\mathcal T_\gamma(f_1)(x)\big|>\sigma/2\big\}\big)\Big]^{1/p}\\
&+\frac{1}{w(Q)^{\kappa/p}}\sigma\cdot \Big[w\big(\big\{x\in Q:\big|\mathcal T_\gamma(f_2)(x)\big|>\sigma/2\big\}\big)\Big]^{1/p}\\
:=&I'_1+I'_2.
\end{split}
\end{equation*}
Let us consider the first term $I'_1$. Using the assumption \eqref{assump2.2} and the condition $w\in\Delta_2$, we have
\begin{equation*}
\begin{split}
I'_1&\leq C\cdot\frac{1}{w(Q)^{\kappa/p}}\left(\int_{\mathbb R^n}|f_1(x)|^p\nu(x)\,dx\right)^{1/p}\\
&=C\cdot\frac{1}{w(Q)^{\kappa/p}}\left(\int_{2Q}|f(x)|^p\nu(x)\,dx\right)^{1/p}\\
&\leq C\big\|f\big\|_{L^{p,\kappa}(\nu,w)}\cdot\frac{w(2Q)^{\kappa/p}}{w(Q)^{\kappa/p}}\\
&\leq C\big\|f\big\|_{L^{p,\kappa}(\nu,w)}.
\end{split}
\end{equation*}
This is exactly what we want. We now deal with the second term $I'_2$. Note that $|x-y|\approx|x_0-y|$, whenever $x,x_0\in Q$ and $y\in(2Q)^c$. For $0<\gamma<n$ and all $x\in Q$, using the standard technique and the condition \eqref{frac sublinear}, we can see that
\begin{align}\label{pointwise2}
\big|\mathcal T_{\gamma}(f_2)(x)\big|&\leq c_2\int_{\mathbb R^n}\frac{|f_2(y)|}{|x-y|^{n-\gamma}}dy
\leq C\int_{(2Q)^c}\frac{|f(y)|}{|x_0-y|^{n-\gamma}}dy\notag\\
&=C\sum_{j=1}^\infty\int_{2^{j+1}Q\backslash 2^{j}Q}\frac{|f(y)|}{|x_0-y|^{n-\gamma}}dy\notag\\
&\leq C\sum_{j=1}^\infty\frac{1}{|2^{j+1}Q|^{1-\gamma/n}}\int_{2^{j+1}Q}|f(y)|\,dy.
\end{align}
This pointwise estimate \eqref{pointwise2} together with Chebyshev's inequality yields
\begin{equation*}
\begin{split}
I'_2&\leq \frac{2}{w(Q)^{\kappa/p}}\cdot\left(\int_Q\big|\mathcal T_\gamma(f_2)(x)\big|^pw(x)\,dx\right)^{1/p}\\
&\leq C\cdot w(Q)^{{(1-\kappa)}/p}\sum_{j=1}^\infty\frac{1}{|2^{j+1}Q|^{1-\gamma/n}}\int_{2^{j+1}Q}|f(y)|\,dy.
\end{split}
\end{equation*}
By using H\"older's inequality with exponent $p>1$, we can deduce that
\begin{equation*}
\begin{split}
I'_2&\leq C\cdot w(Q)^{{(1-\kappa)}/p}
\sum_{j=1}^\infty\frac{1}{|2^{j+1}Q|^{1-\gamma/n}}\left(\int_{2^{j+1}Q}|f(y)|^p\nu(y)\,dy\right)^{1/p}\\
&\times\left(\int_{2^{j+1}Q}\nu(y)^{-p'/p}\,dy\right)^{1/{p'}}\\
&\leq C\big\|f\big\|_{L^{p,\kappa}(\nu,w)}\cdot w(Q)^{{(1-\kappa)}/p}\sum_{j=1}^\infty\frac{w(2^{j+1}Q)^{\kappa/p}}{|2^{j+1}Q|^{1-\gamma/n}}
\times\left(\int_{2^{j+1}Q}\nu(y)^{-p'/p}\,dy\right)^{1/{p'}}\\
&=C\big\|f\big\|_{L^{p,\kappa}(\nu,w)}\sum_{j=1}^\infty\frac{w(Q)^{{(1-\kappa)}/p}}{w(2^{j+1}Q)^{{(1-\kappa)}/p}}
\cdot\frac{w(2^{j+1}Q)^{1/p}}{|2^{j+1}Q|^{1-\gamma/n}}
\times\left(\int_{2^{j+1}Q}\nu(y)^{-p'/p}\,dy\right)^{1/{p'}}.
\end{split}
\end{equation*}
Moreover, we apply the estimate \eqref{U} to get
\begin{equation*}
\begin{split}
I'_2&\leq C\big\|f\big\|_{L^{p,\kappa}(\nu,w)}\sum_{j=1}^\infty\frac{w(Q)^{{(1-\kappa)}/p}}{w(2^{j+1}Q)^{{(1-\kappa)}/p}}
\cdot\frac{|2^{j+1}Q|^{1/{(r'p)}}}{|2^{j+1}Q|}\\
&\times\big|2^{j+1}Q\big|^{\gamma/n}\cdot
\left(\int_{2^{j+1}Q}w(y)^r\,dy\right)^{1/{(rp)}}\left(\int_{2^{j+1}Q}\nu(y)^{-p'/p}\,dy\right)^{1/{p'}}\\
&\leq C\big\|f\big\|_{L^{p,\kappa}(\nu,w)}
\times\sum_{j=1}^\infty\frac{w(Q)^{{(1-\kappa)}/p}}{w(2^{j+1}Q)^{{(1-\kappa)}/p}}.
\end{split}
\end{equation*}
The last inequality is obtained by the $A_p$-type condition $(\S')$ on $(w,\nu)$. Therefore, in view of \eqref{C1}, we find that
\begin{equation*}
I'_2\leq C\big\|f\big\|_{L^{p,\kappa}(\nu,w)}.
\end{equation*}
Combining the above estimates for $I'_1$ and $I'_2$, and then taking the supremum over all cubes $Q\subset\mathbb R^n$ and all $\sigma>0$, we complete the proof of Theorem \ref{mainthm:2}.
\end{proof}

\section{Proofs of Theorems \ref{mainthm:3} and \ref{mainthm:4}}

For the results involving commutators, we need the following properties of $\mathrm{BMO}(\mathbb R^n)$, which can be found in \cite{perez1}.
\begin{lemma}\label{BMO}
Let $b$ be a function in $\mathrm{BMO}(\mathbb R^n)$.

$(i)$ For every cube $Q$ in $\mathbb R^n$ and for any positive integer $j$, then
\begin{equation*}
\big|b_{2^{j+1}Q}-b_Q\big|\leq C\cdot(j+1)\|b\|_*.
\end{equation*}

$(ii)$ Let $1<p<\infty$. For every cube $Q$ in $\mathbb R^n$ and for any $w\in A_{\infty}$, then
\begin{equation*}
\bigg(\int_Q\big|b(x)-b_Q\big|^pw(x)\,dx\bigg)^{1/p}\leq C\|b\|_*\cdot w(Q)^{1/p}.
\end{equation*}
\end{lemma}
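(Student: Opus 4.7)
The plan is to prove the two parts separately: part (i) will follow from a standard telescoping argument on dyadic averages, while part (ii) will combine the John-Nirenberg inequality with the $A_\infty$ self-improving property \eqref{compare}.

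For part (i), I would use the telescoping identity
\[
b_{2^{j+1}Q} - b_Q \;=\; \sum_{k=0}^{j}\bigl(b_{2^{k+1}Q} - b_{2^{k}Q}\bigr),
\]
and bound each summand by exploiting the inclusion $2^{k}Q \subset 2^{k+1}Q$ together with the ratio $|2^{k+1}Q|/|2^{k}Q| = 2^{n}$:
\[
\bigl|b_{2^{k+1}Q} - b_{2^{k}Q}\bigr|
\;\leq\; \frac{1}{|2^{k}Q|}\int_{2^{k}Q}\bigl|b(x) - b_{2^{k+1}Q}\bigr|\,dx
\;\leq\; 2^{n}\cdot\frac{1}{|2^{k+1}Q|}\int_{2^{k+1}Q}\bigl|b(x) - b_{2^{k+1}Q}\bigr|\,dx
\;\leq\; 2^{n}\|b\|_{*}.
\]
Summing over $k = 0, 1, \dots, j$ yields the claimed bound $C(j+1)\|b\|_{*}$.

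For part (ii), the key ingredient is the John-Nirenberg inequality, which provides absolute constants $c_1, c_2 > 0$ such that
\[
\bigl|\{x \in Q : |b(x) - b_{Q}| > \lambda\}\bigr| \;\leq\; c_1 |Q|\exp\!\bigl(-c_2\lambda/\|b\|_{*}\bigr)
\]
for every cube $Q$ and every $\lambda > 0$. Setting $E_{\lambda} := \{x \in Q : |b(x) - b_{Q}| > \lambda\}$, which is a measurable subset of $Q$, and applying the $A_\infty$ characterization \eqref{compare} with exponent $\delta = \delta(w) > 0$, I would convert unweighted into weighted exponential decay:
\[
\frac{w(E_{\lambda})}{w(Q)} \;\leq\; C\left(\frac{|E_{\lambda}|}{|Q|}\right)^{\delta} \;\leq\; C\exp\!\bigl(-c_2\delta\lambda/\|b\|_{*}\bigr).
\]
A layer-cake representation followed by the change of variables $t = \lambda/\|b\|_{*}$ then gives
\[
\int_{Q}\bigl|b(x) - b_{Q}\bigr|^{p} w(x)\,dx
\;=\; p\int_{0}^{\infty}\lambda^{p-1}w(E_{\lambda})\,d\lambda
\;\leq\; C\,w(Q)\,\|b\|_{*}^{p}\int_{0}^{\infty}t^{p-1}e^{-c_2\delta t}\,dt,
\]
and since the last integral is a finite Gamma-type constant, taking $p$-th roots completes the proof.

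The main substantive step is the passage from unweighted to weighted exponential decay, which is exactly what the $A_\infty$ property delivers via \eqref{compare}; everything else is routine once this bridge is in place. The only genuine external input is the John-Nirenberg inequality itself, which I would invoke as a classical tool from BMO theory rather than reprove.
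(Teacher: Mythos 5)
The paper does not actually prove this lemma; it simply cites P\'erez \cite{perez1}. Your argument is a correct, self-contained proof by the standard route: part (i) via telescoping of dyadic averages, bounding each increment $|b_{2^{k+1}Q}-b_{2^kQ}|$ through the inclusion $2^kQ\subset 2^{k+1}Q$ and the volume ratio $2^n$; part (ii) via the John--Nirenberg exponential decay, upgraded to weighted decay through the $A_\infty$ characterization \eqref{compare}, and then integrated by the layer-cake formula against a Gamma-type integral. Both steps are sound and yield exactly the stated bounds.

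One small technical point worth flagging: under the paper's convention $\lambda Q(x_0,\ell):=Q(x_0,\lambda\sqrt{n}\ell)$, the cube $2^0Q$ equals $Q(x_0,\sqrt{n}\ell)$, not $Q(x_0,\ell)=Q$, so your telescoping identity
\begin{equation*}
b_{2^{j+1}Q}-b_Q=\bigl(b_{2^0Q}-b_Q\bigr)+\sum_{k=0}^{j}\bigl(b_{2^{k+1}Q}-b_{2^kQ}\bigr)
\end{equation*}
needs the extra initial term. Since $Q\subset 2^0Q$ with $|2^0Q|/|Q|=n^{n/2}$, the same inclusion-and-volume-ratio argument bounds $|b_{2^0Q}-b_Q|$ by $n^{n/2}\|b\|_*$, so this is only a cosmetic adjustment and the conclusion $|b_{2^{j+1}Q}-b_Q|\le C(j+1)\|b\|_*$ is unaffected.
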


Before proving our main theorems, we will also need a generalization of H\"older's inequality due to O'Neil \cite{neil}.

\begin{lemma}\label{three}
Let $\mathcal A$, $\mathcal B$ and $\mathcal C$ be Young functions such that for all $t>0$,
\begin{equation*}
\mathcal A^{-1}(t)\cdot\mathcal B^{-1}(t)\leq\mathcal C^{-1}(t),
\end{equation*}
where $\mathcal A^{-1}(t)$ is the inverse function of $\mathcal A(t)$. Then for all functions $f$ and $g$ and all cubes $Q$ in $\mathbb R^n$,
\begin{equation*}
\big\|f\cdot g\big\|_{\mathcal C,Q}\leq 2\big\|f\big\|_{\mathcal A,Q}\big\|g\big\|_{\mathcal B,Q}.
\end{equation*}
\end{lemma}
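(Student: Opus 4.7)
The plan is to reduce everything to a pointwise Young-type inequality between $\mathcal A$, $\mathcal B$, $\mathcal C$ and then integrate. By definition of the Luxemburg norm, for any $\lambda>\|f\|_{\mathcal A,Q}$ and $\mu>\|g\|_{\mathcal B,Q}$ one has $|Q|^{-1}\int_Q \mathcal A(|f|/\lambda)\,dx\leq 1$ and $|Q|^{-1}\int_Q \mathcal B(|g|/\mu)\,dx\leq 1$. The target becomes showing $|Q|^{-1}\int_Q \mathcal C(|fg|/(2\lambda\mu))\,dx\leq 1$, which by the definition of the Luxemburg norm gives $\|fg\|_{\mathcal C,Q}\leq 2\lambda\mu$; letting $\lambda\downarrow\|f\|_{\mathcal A,Q}$ and $\mu\downarrow\|g\|_{\mathcal B,Q}$ then finishes the lemma.

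The main step is the pointwise inequality $\mathcal C(st)\leq \mathcal A(s)+\mathcal B(t)$ for all $s,t\geq 0$, which I would deduce directly from the inverse hypothesis. Given $s,t\geq 0$, set $u=\mathcal A(s)$ and $v=\mathcal B(t)$, so $s=\mathcal A^{-1}(u)$ and $t=\mathcal B^{-1}(v)$. Assuming $u\leq v$ (the other case is symmetric), monotonicity of $\mathcal A^{-1}$, the hypothesis, and monotonicity of $\mathcal C^{-1}$ yield
\[
st=\mathcal A^{-1}(u)\,\mathcal B^{-1}(v)\leq \mathcal A^{-1}(v)\,\mathcal B^{-1}(v)\leq \mathcal C^{-1}(v)\leq \mathcal C^{-1}(u+v),
\]
and applying $\mathcal C$ to both ends gives $\mathcal C(st)\leq u+v=\mathcal A(s)+\mathcal B(t)$.

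Applying this pointwise with $s=|f(x)|/\lambda$ and $t=|g(x)|/\mu$, then integrating over $Q$ and dividing by $|Q|$, produces $|Q|^{-1}\int_Q \mathcal C(|fg|/(\lambda\mu))\,dx\leq 2$. The factor $2$ is then absorbed by convexity of $\mathcal C$: since $\mathcal C$ is convex with $\mathcal C(0)=0$, the inequality $\mathcal C(\tau/2)\leq \mathcal C(\tau)/2$ holds for every $\tau\geq 0$, so halving the argument halves the integral and yields $|Q|^{-1}\int_Q \mathcal C(|fg|/(2\lambda\mu))\,dx\leq 1$, which is the required bound.

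The only delicate point in this scheme is the derivation of the pointwise Young-type inequality from the inverse condition; once that is in hand, the Luxemburg-norm normalization and the convexity reduction are essentially bookkeeping. It is worth noting that the constant $2$ in the statement is introduced precisely by the convexity step and is intrinsic to this argument, and that using the generalized (right-continuous) inverses would cover the case where $\mathcal A,\mathcal B,\mathcal C$ are only weakly increasing — although for the Young functions relevant to this paper, namely $\mathcal A(t)=t^{p'}[\log(e+t)]^{p'}$ and its companions, strict monotonicity makes the two-case argument immediate.
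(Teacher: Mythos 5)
Your proof is correct. The paper itself does not prove Lemma \ref{three}; it is stated with a citation to O'Neil \cite{neil} and used as a black box, so there is no in-paper argument to compare against. Your derivation is the standard one: the hypothesis on inverses yields the pointwise Young-type inequality $\mathcal C(st)\leq\mathcal A(s)+\mathcal B(t)$ by the symmetric case split on $u=\mathcal A(s)$ versus $v=\mathcal B(t)$ together with monotonicity of $\mathcal A^{-1}$ and $\mathcal C^{-1}$; plugging in $s=|f|/\lambda$, $t=|g|/\mu$ for any $\lambda>\|f\|_{\mathcal A,Q}$, $\mu>\|g\|_{\mathcal B,Q}$ and averaging over $Q$ gives $|Q|^{-1}\int_Q\mathcal C(|fg|/(\lambda\mu))\leq 2$; the convexity of $\mathcal C$ with $\mathcal C(0)=0$ gives $\mathcal C(\tau/2)\leq\mathcal C(\tau)/2$, which converts the bound $2$ into a bound $1$ at the cost of the factor $2$ in the argument, and the Luxemburg-norm definition plus the infimum over $\lambda,\mu$ closes the argument. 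All steps check out; the remark about generalized inverses is unnecessary here since the paper's definition of a Young function already requires strict monotonicity, but it does no harm. This supplies a clean, self-contained proof of a lemma the paper only cites.
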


We are now ready to give the proofs of Theorems \ref{mainthm:3} and \ref{mainthm:4}.

\begin{proof}[Proof of Theorem $\ref{mainthm:3}$]
Let $f\in L^{p,\kappa}(\nu,w)$ with $1<p<\infty$ and $0<\kappa<1$. For any given cube $Q=Q(x_0,\ell)\subset\mathbb R^n$, we split $f$ as usual by
\begin{equation*}
f=f\cdot\chi_{2Q}+f\cdot\chi_{(2Q)^c}:=f_1+f_2,
\end{equation*}
where $2Q:=Q(x_0,2\sqrt{n}\ell)$. Then for any given $\sigma>0$, one writes
\begin{equation*}
\begin{split}
&\frac{1}{w(Q)^{\kappa/p}}\sigma\cdot \Big[w\big(\big\{x\in Q:\big|[b,\mathcal T](f)(x)\big|>\sigma\big\}\big)\Big]^{1/p}\\
\leq &\frac{1}{w(Q)^{\kappa/p}}\sigma\cdot \Big[w\big(\big\{x\in Q:\big|[b,\mathcal T](f_1)(x)\big|>\sigma/2\big\}\big)\Big]^{1/p}\\
&+\frac{1}{w(Q)^{\kappa/p}}\sigma\cdot \Big[w\big(\big\{x\in Q:\big|[b,\mathcal T](f_2)(x)\big|>\sigma/2\big\}\big)\Big]^{1/p}\\
:=&J_1+J_2.
\end{split}
\end{equation*}
Since $w$ is an $A_{\infty}$ weight, we know that $w\in\Delta_2$. By our assumption \eqref{assump3.2} and inequality (\ref{weights}), we have
\begin{equation*}
\begin{split}
J_1&\leq C\cdot\frac{1}{w(Q)^{\kappa/p}}\left(\int_{\mathbb R^n}|f_1(x)|^p\nu(x)\,dx\right)^{1/p}\\
&=C\cdot\frac{1}{w(Q)^{\kappa/p}}\left(\int_{2Q}|f(x)|^p\nu(x)\,dx\right)^{1/p}\\
&\leq C\big\|f\big\|_{L^{p,\kappa}(\nu,w)}\cdot\frac{w(2Q)^{\kappa/p}}{w(Q)^{\kappa/p}}\\
&\leq C\big\|f\big\|_{L^{p,\kappa}(\nu,w)},
\end{split}
\end{equation*}
which is exactly what we want. For any $x\in Q$, from the size condition \eqref{sublinear commutator}, we can easily see that
\begin{equation*}
\begin{split}
\big|[b,\mathcal T](f_2)(x)\big|&\leq c_3\int_{\mathbb R^n}\frac{|b(x)-b(y)|\cdot|f_2(y)|}{|x-y|^n}dy\\
&\leq c_3\big|b(x)-b_{Q}\big|\cdot\int_{\mathbb R^n}\frac{|f_2(y)|}{|x-y|^n}dy+c_3\int_{\mathbb R^n}\frac{|b(y)-b_{Q}|\cdot|f_2(y)|}{|x-y|^n}dy\\
&:=\xi(x)+\eta(x).
\end{split}
\end{equation*}
Hence, we can further split $J_2$ into two parts as follows:
\begin{equation*}
\begin{split}
J_2\leq&\frac{1}{w(Q)^{\kappa/p}}\sigma\cdot\Big[w\big(\big\{x\in Q:\xi(x)>\sigma/4\big\}\big)\Big]^{1/p}
+\frac{1}{w(Q)^{\kappa/p}}\sigma\cdot\Big[w\big(\big\{x\in Q:\eta(x)>\sigma/4\big\}\big)\Big]^{1/p}\\
:=&J_3+J_4.
\end{split}
\end{equation*}
For the term $J_3$, it follows from the pointwise estimate \eqref{pointwise1} and Chebyshev's inequality that
\begin{equation*}
\begin{split}
J_3&\leq\frac{4}{w(Q)^{\kappa/p}}\cdot\left(\int_Q |\xi(x)|^pw(x)\,dx\right)^{1/p}\\
&\leq\frac{C}{w(Q)^{\kappa/p}}\cdot\left(\int_Q \big|b(x)-b_{Q}\big|^pw(x)\,dx\right)^{1/p}\sum_{j=1}^\infty\frac{1}{|2^{j+1}Q|}\int_{2^{j+1}Q}|f(y)|\,dy\\
&\leq C\|b\|_*\cdot w(Q)^{{(1-\kappa)}/p}\sum_{j=1}^\infty\frac{1}{|2^{j+1}Q|}\int_{2^{j+1}Q}|f(y)|\,dy,
\end{split}
\end{equation*}
where in the last inequality we have used the second part of Lemma \ref{BMO} since $w\in A_{\infty}$. Repeating the arguments used in Theorem \ref{mainthm:1}, we can also prove that
\begin{equation*}
J_3\leq C\big\|f\big\|_{L^{p,\kappa}(\nu,w)}.
\end{equation*}
Let us consider the term $J_4$. Similar to the proof of \eqref{pointwise1}, for any given $x\in Q$, we can obtain the following pointwise estimate as well.
\begin{align}\label{pointwise3}
\big|\eta(x)\big|
&= c_3\int_{\mathbb R^n}\frac{|b(y)-b_{Q}|\cdot|f_2(y)|}{|x-y|^n}dy\notag\\
&\leq C\int_{(2Q)^c}\frac{|b(y)-b_{Q}|\cdot|f(y)|}{|x_0-y|^n}dy\notag\\
&\leq C\sum_{j=1}^\infty\frac{1}{|2^{j+1}Q|}\int_{2^{j+1}Q}\big|b(y)-b_{Q}\big|\cdot\big|f(y)\big|\,dy.
\end{align}
This, together with Chebyshev's inequality, yields
\begin{equation*}
\begin{split}
J_4&\leq\frac{4}{w(Q)^{\kappa/p}}\cdot\left(\int_Q|\eta(x)|^pw(x)\,dx\right)^{1/p}\\
&\leq C\cdot w(Q)^{{(1-\kappa)}/p}\cdot
\sum_{j=1}^\infty\frac{1}{|2^{j+1}Q|}\int_{2^{j+1}Q}\big|b(y)-b_{Q}\big|\cdot\big|f(y)\big|\,dy\\
&\leq C\cdot w(Q)^{{(1-\kappa)}/p}\cdot
\sum_{j=1}^\infty\frac{1}{|2^{j+1}Q|}\int_{2^{j+1}Q}\big|b(y)-b_{{2^{j+1}Q}}\big|\cdot\big|f(y)\big|\,dy\\
&+C\cdot w(Q)^{{(1-\kappa)}/p}\cdot
\sum_{j=1}^\infty\frac{1}{|2^{j+1}Q|}\int_{2^{j+1}Q}\big|b_{{2^{j+1}Q}}-b_{Q}\big|\cdot\big|f(y)\big|\,dy\\
&:=J_5+J_6.
\end{split}
\end{equation*}
An application of H\"older's inequality leads to that
\begin{equation*}
\begin{split}
J_5&\leq C\cdot w(Q)^{{(1-\kappa)}/p}\cdot\sum_{j=1}^\infty\frac{1}{|2^{j+1}Q|}
\bigg(\int_{2^{j+1}Q}|f(y)|^p\nu(y)\,dy\bigg)^{1/p}\\
&\times\bigg(\int_{2^{j+1}Q}\big|b(y)-b_{{2^{j+1}Q}}\big|^{p'}\nu(y)^{-p'/p}\,dy\bigg)^{1/{p'}}\\
&\leq C\big\|f\big\|_{L^{p,\kappa}(\nu,w)}\cdot w(Q)^{{(1-\kappa)}/p}\sum_{j=1}^\infty\frac{w(2^{j+1}Q)^{\kappa/p}}{|2^{j+1}Q|}\\
&\times\big|2^{j+1}Q\big|^{1/{p'}}\Big\|\big[b-b_{{2^{j+1}Q}}\big]\cdot \nu^{-1/p}\Big\|_{\mathcal C,2^{j+1}Q},
\end{split}
\end{equation*}
where $\mathcal C(t)=t^{p'}$ is a Young function by \eqref{norm}. For $1<p<\infty$, it is immediate that the inverse function of $\mathcal C(t)$ is $\mathcal C^{-1}(t)=t^{1/{p'}}$. Also observe that the following identity is true:
\begin{equation*}
\begin{split}
\mathcal C^{-1}(t)&=t^{1/{p'}}\\
&=\frac{t^{1/{p'}}}{\log(e+t)}\times\log(e+t)\\
&=\mathcal A^{-1}(t)\cdot\mathcal B^{-1}(t),
\end{split}
\end{equation*}
where
\begin{equation*}
\mathcal A(t)\approx t^{p'}\big[\log(e+t)\big]^{p'}\qquad \&\qquad \mathcal B(t)\approx \exp(t)-1.
\end{equation*}
Let $\|h\|_{\exp L,Q}$ denote the mean Luxemburg norm of $h$ on cube $Q$ with Young function $\mathcal B(t)\approx \exp(t)-1$. Thus, by Lemma \ref{three}, we have
\begin{align}\label{key}
\Big\|\big[b-b_{{2^{j+1}Q}}\big]\cdot \nu^{-1/p}\Big\|_{\mathcal C,2^{j+1}Q}
&\leq C\big\|b-b_{{2^{j+1}Q}}\big\|_{\exp L,2^{j+1}Q}\cdot\big\|\nu^{-1/p}\big\|_{\mathcal A,2^{j+1}Q}\notag\\
&\leq C\|b\|_*\cdot\big\|\nu^{-1/p}\big\|_{\mathcal A,2^{j+1}Q},
\end{align}
where in the last inequality we have used the well-known fact that (see \cite{perez1})
\begin{equation}\label{Jensen}
\big\|b-b_{Q}\big\|_{\exp L,Q}\leq C\|b\|_*,\qquad \mbox{for any cube }Q\subset\mathbb R^n.
\end{equation}
This is equivalent to the following inequality
\begin{equation*}
\frac{1}{|Q|}\int_Q\exp\bigg(\frac{|b(y)-b_Q|}{c_0\|b\|_*}\bigg)\,dy\leq C,\qquad \mbox{for any cube }Q\subset\mathbb R^n,
\end{equation*}
which is just a corollary of the celebrated John--Nirenberg's inequality (see \cite{john}). Consequently,
\begin{equation*}
J_5\leq C\|b\|_*\big\|f\big\|_{L^{p,\kappa}(\nu,w)}
\sum_{j=1}^\infty\frac{w(Q)^{{(1-\kappa)}/p}}{w(2^{j+1}Q)^{{(1-\kappa)}/p}}
\cdot\frac{w(2^{j+1}Q)^{1/p}}{|2^{j+1}Q|^{1/p}}\cdot\big\|\nu^{-1/p}\big\|_{\mathcal A,2^{j+1}Q}.
\end{equation*}
Moreover, in view of \eqref{U}, we can deduce that
\begin{equation*}
\begin{split}
J_5&\leq C\big\|f\big\|_{L^{p,\kappa}(\nu,w)}\sum_{j=1}^\infty\frac{w(Q)^{{(1-\kappa)}/p}}{w(2^{j+1}Q)^{{(1-\kappa)}/p}}\\
&\times\left(\frac{1}{|2^{j+1}Q|}\int_{2^{j+1}Q}w(x)^r\,dx\right)^{1/{(rp)}}\cdot\big\|\nu^{-1/p}\big\|_{\mathcal A,2^{j+1}Q}\\
&\leq C\big\|f\big\|_{L^{p,\kappa}(\nu,w)}\times\sum_{j=1}^\infty\frac{w(Q)^{{(1-\kappa)}/p}}{w(2^{j+1}Q)^{{(1-\kappa)}/p}}\\
&\leq C\big\|f\big\|_{L^{p,\kappa}(\nu,w)}.
\end{split}
\end{equation*}
The last inequality is obtained by the $A_p$-type condition $(\S\S)$ on $(w,\nu)$ and the estimate \eqref{C1}. It remains to estimate the last term $J_6$. Applying the first part of Lemma \ref{BMO} and H\"older's inequality, we get
\begin{equation*}
\begin{split}
J_6&\leq C\cdot w(Q)^{{(1-\kappa)}/p}\cdot
\sum_{j=1}^\infty\frac{(j+1)\|b\|_*}{|2^{j+1}Q|}\int_{2^{j+1}Q}|f(y)|\,dy\\
&\leq C\cdot w(Q)^{{(1-\kappa)}/p}\cdot
\sum_{j=1}^\infty\frac{(j+1)\|b\|_*}{|2^{j+1}Q|}
\left(\int_{2^{j+1}Q}|f(y)|^p\nu(y)\,dy\right)^{1/p}\\
&\times\left(\int_{2^{j+1}Q}\nu(y)^{-p'/p}\,dy\right)^{1/{p'}}\\
&\leq C\big\|f\big\|_{L^{p,\kappa}(\nu,w)}\cdot w(Q)^{{(1-\kappa)}/p}\sum_{j=1}^\infty(j+1)\cdot\frac{w(2^{j+1}Q)^{\kappa/p}}{|2^{j+1}Q|}\\
&\times\left(\int_{2^{j+1}Q}\nu(y)^{-p'/p}\,dy\right)^{1/{p'}}\\
\end{split}
\end{equation*}
\begin{equation*}
\begin{split}
&= C\big\|f\big\|_{L^{p,\kappa}(\nu,w)}\sum_{j=1}^\infty(j+1)\cdot
\frac{w(Q)^{{(1-\kappa)}/p}}{w(2^{j+1}Q)^{{(1-\kappa)}/p}}\cdot\frac{w(2^{j+1}Q)^{1/p}}{|2^{j+1}Q|}\\
&\times\left(\int_{2^{j+1}Q}\nu(y)^{-p'/p}\,dy\right)^{1/{p'}}.
\end{split}
\end{equation*}
Let $\mathcal C(t)$ and $\mathcal A(t)$ be the same as before. Obviously, $\mathcal C(t)\leq\mathcal A(t)$ for all $t>0$, then for any cube $Q$ in $\mathbb R^n$, one has $\big\|f\big\|_{\mathcal C,Q}\leq\big\|f\big\|_{\mathcal A,Q}$ by definition, which implies that the condition $(\S\S)$ is stronger than the condition $(\S)$. This fact together with \eqref{U} yields
\begin{equation*}
\begin{split}
J_6&\leq C\big\|f\big\|_{L^{p,\kappa}(\nu,w)}\sum_{j=1}^\infty(j+1)\cdot
\frac{w(Q)^{{(1-\kappa)}/p}}{w(2^{j+1}Q)^{{(1-\kappa)}/p}}\cdot\frac{|2^{j+1}Q|^{1/{(r'p)}}}{|2^{j+1}Q|}\\
&\times\left(\int_{2^{j+1}Q}w(y)^r\,dy\right)^{1/{(rp)}}\left(\int_{2^{j+1}Q}\nu(y)^{-p'/p}\,dy\right)^{1/{p'}}\\
&\leq C\big\|f\big\|_{L^{p,\kappa}(\nu,w)}
\sum_{j=1}^\infty(j+1)\cdot\frac{w(Q)^{{(1-\kappa)}/p}}{w(2^{j+1}Q)^{{(1-\kappa)}/p}}.
\end{split}
\end{equation*}
Moreover, by our hypothesis on $w:w\in A_\infty$ and inequality (\ref{compare}), we compute
\begin{align}\label{C2}
\sum_{j=1}^\infty(j+1)\cdot\frac{w(Q)^{{(1-\kappa)}/p}}{w(2^{j+1}Q)^{{(1-\kappa)}/p}}
&\leq C\sum_{j=1}^\infty(j+1)\cdot\left(\frac{|Q|}{|2^{j+1}Q|}\right)^{{\delta(1-\kappa)}/p}\notag\\
&\leq C\sum_{j=1}^\infty(j+1)\cdot\left(\frac{1}{2^{(j+1)n}}\right)^{{\delta(1-\kappa)}/p}\leq C,
\end{align}
where the last series is convergent since the exponent $\delta {(1-\kappa)}/p$ is positive. This implies our desired estimate
\begin{equation*}
J_6\leq C\big\|f\big\|_{L^{p,\kappa}(\nu,w)}.
\end{equation*}
Summing up all the above estimates, and then taking the supremum over all cubes $Q\subset\mathbb R^n$ and all $\sigma>0$, we conclude the proof of Theorem \ref{mainthm:3}.
\end{proof}

\begin{proof}[Proof of Theorem $\ref{mainthm:4}$]
Let $f\in L^{p,\kappa}(\nu,w)$ with $1<p<\infty$ and $0<\kappa<1$. For any given cube $Q=Q(x_0,\ell)\subset\mathbb R^n$, as before, we set
\begin{equation*}
f=f_1+f_2,\qquad f_1=f\cdot\chi_{2Q},\quad  f_2=f\cdot\chi_{(2Q)^c},
\end{equation*}
where $2Q:=Q(x_0,2\sqrt{n}\ell)$. Then for any given $\sigma>0$, one writes
\begin{equation*}
\begin{split}
&\frac{1}{w(Q)^{\kappa/p}}\sigma\cdot \Big[w\big(\big\{x\in Q:\big|[b,\mathcal T_\gamma](f)(x)\big|>\sigma\big\}\big)\Big]^{1/p}\\
\leq &\frac{1}{w(Q)^{\kappa/p}}\sigma\cdot \Big[w\big(\big\{x\in Q:\big|[b,\mathcal T_\gamma](f_1)(x)\big|>\sigma/2\big\}\big)\Big]^{1/p}\\
&+\frac{1}{w(Q)^{\kappa/p}}\sigma\cdot \Big[w\big(\big\{x\in Q:\big|[b,\mathcal T_\gamma](f_2)(x)\big|>\sigma/2\big\}\big)\Big]^{1/p}\\
:=&J'_1+J'_2.
\end{split}
\end{equation*}
Since $w$ is an $A_{\infty}$ weight, then we have $w\in\Delta_2$. From our assumption \eqref{assump4.2} and inequality \eqref{weights}, it follows that
\begin{equation*}
\begin{split}
J'_1&\leq C\cdot\frac{1}{w(Q)^{\kappa/p}}\left(\int_{\mathbb R^n}|f_1(x)|^p \nu(x)\,dx\right)^{1/p}\\
&=C\cdot\frac{1}{w(Q)^{\kappa/p}}\left(\int_{2Q}|f(x)|^p \nu(x)\,dx\right)^{1/p}\\
&\leq C\big\|f\big\|_{L^{p,\kappa}(\nu,w)}\cdot\frac{w(2Q)^{\kappa/p}}{w(Q)^{\kappa/p}}\\
&\leq C\big\|f\big\|_{L^{p,\kappa}(\nu,w)}.
\end{split}
\end{equation*}
On the other hand, for any $x\in Q$, from the size condition (\ref{frac sublinear commutator}), it then follows that
\begin{equation*}
\begin{split}
\big|[b,\mathcal T_\gamma](f_2)(x)\big|&\leq c_4\int_{\mathbb R^n}\frac{|b(x)-b(y)|\cdot|f_2(y)|}{|x-y|^{n-\gamma}}dy\\
&\leq c_4\big|b(x)-b_{Q}\big|\cdot\int_{\mathbb R^n}\frac{|f_2(y)|}{|x-y|^{n-\gamma}}dy
+c_4\int_{\mathbb R^n}\frac{|b(y)-b_{Q}|\cdot|f_2(y)|}{|x-y|^{n-\gamma}}dy\\
&:=\widetilde\xi(x)+\widetilde\eta(x).
\end{split}
\end{equation*}
Thus, we can further split $J'_2$ into two parts as follows:
\begin{equation*}
\begin{split}
J'_2\leq&\frac{1}{w(Q)^{\kappa/p}}\sigma\cdot\Big[w\big(\big\{x\in Q:\widetilde\xi(x)>\sigma/4\big\}\big)\Big]^{1/p}
+\frac{1}{w(Q)^{\kappa/p}}\sigma\cdot\Big[w\big(\big\{x\in Q:\widetilde\eta(x)>\sigma/4\big\}\big)\Big]^{1/p}\\
:=&J'_3+J'_4.
\end{split}
\end{equation*}
Using the pointwise estimate \eqref{pointwise2} and Chebyshev's inequality, we obtain that
\begin{equation*}
\begin{split}
J'_3&\leq\frac{4}{w(Q)^{\kappa/p}}\cdot\left(\int_Q\big|\widetilde\xi(x)\big|^pw(x)\,dx\right)^{1/p}\\
&\leq\frac{C}{w(Q)^{\kappa/p}}\cdot\left(\int_Q \big|b(x)-b_{Q}\big|^pw(x)\,dx\right)^{1/p}
\sum_{j=1}^\infty\frac{1}{|2^{j+1}Q|^{1-\gamma/n}}\int_{2^{j+1}Q}|f(y)|\,dy\\
&\leq C\|b\|_*\cdot w(Q)^{{(1-\kappa)}/p}
\sum_{j=1}^\infty\frac{1}{|2^{j+1}Q|^{1-\gamma/n}}\int_{2^{j+1}Q}|f(y)|\,dy,
\end{split}
\end{equation*}
where the last inequality is due to $w\in A_{\infty}$ and Lemma \ref{BMO} $(ii)$. By using the same arguments as that of Theorem \ref{mainthm:2}, we can also show that
\begin{equation*}
J'_3\leq C\big\|f\big\|_{L^{p,\kappa}(\nu,w)}.
\end{equation*}
Similar to the proof of \eqref{pointwise2}, for all $x\in Q$, we can show the following pointwise inequality as well.
\begin{align}\label{pointwise4}
\big|\widetilde\eta(x)\big|
&= c_4\int_{\mathbb R^n}\frac{|b(y)-b_{Q}|\cdot|f_2(y)|}{|x-y|^{n-\gamma}}dy\notag\\
&\leq C\int_{(2Q)^c}\frac{|b(y)-b_{Q}|\cdot|f(y)|}{|x_0-y|^{n-\gamma}}dy\notag\\
&\leq C\sum_{j=1}^\infty\frac{1}{|2^{j+1}Q|^{1-\gamma/n}}\int_{2^{j+1}Q}\big|b(y)-b_{Q}\big|\cdot\big|f(y)\big|\,dy.
\end{align}
This, together with Chebyshev's inequality, yields
\begin{equation*}
\begin{split}
J'_4&\leq\frac{4}{w(Q)^{\kappa/p}}\cdot\left(\int_Q\big|\widetilde\eta(x)\big|^pw(x)\,dx\right)^{1/p}\\
&\leq C\cdot w(Q)^{{(1-\kappa)}/p}\cdot
\sum_{j=1}^\infty\frac{1}{|2^{j+1}Q|^{1-\gamma/n}}\int_{2^{j+1}Q}\big|b(y)-b_{Q}\big|\cdot\big|f(y)\big|\,dy\\
&\leq C\cdot w(Q)^{{(1-\kappa)}/p}\cdot
\sum_{j=1}^\infty\frac{1}{|2^{j+1}Q|^{1-\gamma/n}}\int_{2^{j+1}Q}\big|b(y)-b_{{2^{j+1}Q}}\big|\cdot\big|f(y)\big|\,dy\\
&+C\cdot w(Q)^{{(1-\kappa)}/p}\cdot
\sum_{j=1}^\infty\frac{1}{|2^{j+1}Q|^{1-\gamma/n}}\int_{2^{j+1}Q}\big|b_{{2^{j+1}Q}}-b_{Q}\big|\cdot\big|f(y)\big|\,dy\\
&:=J'_5+J'_6.
\end{split}
\end{equation*}
An application of H\"older's inequality leads to that
\begin{equation*}
\begin{split}
J'_5&\leq C\cdot w(Q)^{{(1-\kappa)}/p}\cdot\sum_{j=1}^\infty\frac{1}{|2^{j+1}Q|^{1-\gamma/n}}
\left(\int_{2^{j+1}Q}|f(y)|^p\nu(y)\,dy\right)^{1/p}\\
&\times\left(\int_{2^{j+1}Q}\big|b(y)-b_{{2^{j+1}Q}}\big|^{p'}\nu(y)^{-p'/p}\,dy\right)^{1/{p'}}\\
&\leq C\big\|f\big\|_{L^{p,\kappa}(\nu,w)}\cdot w(Q)^{{(1-\kappa)}/p}\sum_{j=1}^\infty\frac{w(2^{j+1}Q)^{\kappa/p}}{|2^{j+1}Q|^{1-\gamma/n}}\\
&\times\big|2^{j+1}Q\big|^{1/{p'}}\Big\|\big[b-b_{{2^{j+1}Q}}\big]\cdot \nu^{-1/p}\Big\|_{\mathcal C,2^{j+1}Q},
\end{split}
\end{equation*}
where $\mathcal C(t)=t^{p'}$ is a Young function. Let $\mathcal B(t)$ and $\mathcal A(t)$ be the same as in Theorem \ref{mainthm:3}. In view of \eqref{key} and \eqref{U}, we can deduce that
\begin{equation*}
\begin{split}
J'_5&\leq C\big\|f\big\|_{L^{p,\kappa}(\nu,w)}
\sum_{j=1}^\infty\frac{w(Q)^{{(1-\kappa)}/p}}{w(2^{j+1}Q)^{{(1-\kappa)}/p}}
\cdot\frac{w(2^{j+1}Q)^{1/p}}{|2^{j+1}Q|^{1/p-\gamma/n}}\cdot\|b\|_*\big\|\nu^{-1/p}\big\|_{\mathcal A,2^{j+1}Q}\\
&\leq C\|b\|_*\big\|f\big\|_{L^{p,\kappa}(\nu,w)}\sum_{j=1}^\infty\frac{w(Q)^{{(1-\kappa)}/p}}{w(2^{j+1}Q)^{{(1-\kappa)}/p}}\\
&\times\big|2^{j+1}Q\big|^{\gamma/n}\cdot
\left(\frac{1}{|2^{j+1}Q|}\int_{2^{j+1}Q}w(x)^r\,dx\right)^{1/{(rp)}}\cdot\big\|\nu^{-1/p}\big\|_{\mathcal A,2^{j+1}Q}.\\
\end{split}
\end{equation*}
Furthermore, by the $A_p$-type condition $(\S\S')$ on $(w,\nu)$ and the estimate \eqref{C1}, we obtain
\begin{equation*}
\begin{split}
J'_5&\leq C\big\|f\big\|_{L^{p,\kappa}(\nu,w)}\times\sum_{j=1}^\infty\frac{w(Q)^{{(1-\kappa)}/p}}{w(2^{j+1}Q)^{{(1-\kappa)}/p}}\\
&\leq C\big\|f\big\|_{L^{p,\kappa}(\nu,w)}.
\end{split}
\end{equation*}
It remains to estimate the last term $J'_6$. Making use of the first part of Lemma \ref{BMO} and H\"older's inequality, we get
\begin{equation*}
\begin{split}
J'_6&\leq C\cdot w(Q)^{{(1-\kappa)}/p}\cdot
\sum_{j=1}^\infty\frac{(j+1)\|b\|_*}{|2^{j+1}Q|^{1-\gamma/n}}\int_{2^{j+1}Q}|f(y)|\,dy\\
&\leq C\cdot w(Q)^{{(1-\kappa)}/p}\cdot
\sum_{j=1}^\infty\frac{(j+1)\|b\|_*}{|2^{j+1}Q|^{1-\gamma/n}}
\left(\int_{2^{j+1}Q}|f(y)|^p\nu(y)\,dy\right)^{1/p}\\
&\times\left(\int_{2^{j+1}Q}\nu(y)^{-p'/p}\,dy\right)^{1/{p'}}\\
&\leq C\big\|f\big\|_{L^{p,\kappa}(\nu,w)}\cdot w(Q)^{{(1-\kappa)}/p}
\sum_{j=1}^\infty(j+1)\cdot\frac{w(2^{j+1}Q)^{\kappa/p}}{|2^{j+1}Q|^{1-\gamma/n}}\\
&\times\left(\int_{2^{j+1}Q}\nu(y)^{-p'/p}\,dy\right)^{1/{p'}}\\
&=C\big\|f\big\|_{L^{p,\kappa}(\nu,w)}\sum_{j=1}^\infty(j+1)\cdot
\frac{w(Q)^{{(1-\kappa)}/p}}{w(2^{j+1}Q)^{{(1-\kappa)}/p}}\cdot\frac{w(2^{j+1}Q)^{1/p}}{|2^{j+1}Q|^{1-\gamma/n}}\\
&\times\left(\int_{2^{j+1}Q}\nu(y)^{-p'/p}\,dy\right)^{1/{p'}}.
\end{split}
\end{equation*}
It was pointed out in Theorem \ref{mainthm:3} that for any cube $Q$ in $\mathbb R^n$, one has $\big\|f\big\|_{\mathcal C,Q}\leq\big\|f\big\|_{\mathcal A,Q}$, where $\mathcal C(t)=t^{p'}$ and $\mathcal A(t)\approx t^{p'}\big[\log(e+t)\big]^{p'}$. This implies that the condition $(\S\S')$ is stronger than the condition $(\S')$. Using this fact along with \eqref{U}, we can see that
\begin{equation*}
\begin{split}
J'_6&\leq C\big\|f\big\|_{L^{p,\kappa}(\nu,w)}\sum_{j=1}^\infty(j+1)\cdot
\frac{w(Q)^{{(1-\kappa)}/p}}{w(2^{j+1}Q)^{{(1-\kappa)}/p}}\cdot\frac{|2^{j+1}Q|^{1/{(r'p)}}}{|2^{j+1}Q|^{1-\gamma/n}}\\
&\times\left(\int_{2^{j+1}Q}w(y)^r\,dy\right)^{1/{(rp)}}\left(\int_{2^{j+1}Q}\nu(y)^{-p'/p}\,dy\right)^{1/{p'}}\\
&\leq C\big\|f\big\|_{L^{p,\kappa}(\nu,w)}
\sum_{j=1}^\infty(j+1)\cdot\frac{w(Q)^{{(1-\kappa)}/p}}{w(2^{j+1}Q)^{{(1-\kappa)}/p}}\\
&\leq C\big\|f\big\|_{L^{p,\kappa}(\nu,w)},
\end{split}
\end{equation*}
where the last inequality follows from the estimate \eqref{C2}.Summarizing the estimates derived above, and then taking the supremum over all cubes $Q\subset\mathbb R^n$ and all $\sigma>0$, we therefore conclude the proof of Theorem \ref{mainthm:4}.
\end{proof}

\section{Proofs of Theorems \ref{mainthm:5} and \ref{mainthm:6}}

\begin{proof}[Proof of Theorem $\ref{mainthm:5}$]
Let $1<p\leq\alpha<q\leq\infty$ and $f\in(L^p,L^q)^{\alpha}(\nu,w;\mu)$ with $w\in\Delta_2$ and $\mu\in\Delta_2$. For any cube $Q=Q(y,\ell)\subset\mathbb R^n$ with $y\in\mathbb R^n$ and $\ell>0$, we denote by $\lambda Q$ the cube concentric with $Q$ whose each edge is $\lambda\sqrt{n}$ times as long, that is, $\lambda Q=Q(y,\lambda\sqrt{n}\ell)$. Decompose $f$ as
\begin{equation*}
\begin{cases}
f=f_1+f_2\in (L^p,L^q)^{\alpha}(\nu,w;\mu);\  &\\
f_1=f\cdot\chi_{2Q};\  &\\
f_2=f\cdot\chi_{(2Q)^c},
\end{cases}
\end{equation*}
where $\chi_{2Q}$ denotes the characteristic function of $2Q=Q(y,2\sqrt{n}\ell)$. Then for given $y\in\mathbb R^n$ and $\ell>0$, we write
\begin{align}\label{K}
&w(Q(y,\ell))^{1/{\alpha}-1/p-1/q}\big\|\mathcal T(f)\cdot\chi_{Q(y,\ell)}\big\|_{WL^p(w)}\notag\\
&\leq 2\cdot w(Q(y,\ell))^{1/{\alpha}-1/p-1/q}\big\|\mathcal T(f_1)\cdot\chi_{Q(y,\ell)}\big\|_{WL^p(w)}\notag\\
&+2\cdot w(Q(y,\ell))^{1/{\alpha}-1/p-1/q}\big\|\mathcal T(f_2)\cdot\chi_{Q(y,\ell)}\big\|_{WL^p(w)}\notag\\
&:=K_1(y,\ell)+K_2(y,\ell).
\end{align}
Let us consider the first term $K_1(y,\ell)$. In view of \eqref{assump1.2}, we get
\begin{align}\label{K1}
K_1(y,\ell)&\leq 2\cdot w(Q(y,\ell))^{1/{\alpha}-1/p-1/q}\big\|\mathcal T(f_1)\big\|_{WL^p(w)}\notag\\
&\leq C\cdot w(Q(y,\ell))^{1/{\alpha}-1/p-1/q}
\bigg(\int_{Q(y,2\sqrt{n}\ell)}|f(x)|^p\nu(x)\,dx\bigg)^{1/p}\notag\\
&=C\cdot w(Q(y,2\sqrt{n}\ell))^{1/{\alpha}-1/p-1/q}\big\|f\cdot\chi_{Q(y,2\sqrt{n}\ell)}\big\|_{L^p(\nu)}\notag\\
&\times \frac{w(Q(y,\ell))^{1/{\alpha}-1/p-1/q}}{w(Q(y,2\sqrt{n}\ell))^{1/{\alpha}-1/p-1/q}}.
\end{align}
Moreover, since $1/{\alpha}-1/p-1/q<0$ and $w\in \Delta_2$, then by doubling inequality \eqref{weights}, we obtain
\begin{equation}\label{doubling3}
\frac{w(Q(y,\ell))^{1/{\alpha}-1/p-1/q}}{w(Q(y,2\sqrt{n}\ell))^{1/{\alpha}-1/p-1/q}}\leq C.
\end{equation}
Substituting the above inequality \eqref{doubling3} into \eqref{K1}, we thus obtain
\begin{equation}\label{k1yr}
K_1(y,\ell)\leq C\cdot w(Q(y,2\sqrt{n}\ell))^{1/{\alpha}-1/p-1/q}\big\|f\cdot\chi_{Q(y,2\sqrt{n}\ell)}\big\|_{L^p(\nu)}.
\end{equation}
As for the second term $K_2(y,\ell)$, recall that by the size condition \eqref{sublinear}, the following inequality holds for any $x\in Q(y,\ell)$,
\begin{equation}\label{pointwise5}
\big|\mathcal T(f_2)(x)\big|\leq C
\sum_{j=1}^\infty\frac{1}{|Q(y,2^{j+1}\sqrt{n}\ell)|}\int_{Q(y,2^{j+1}\sqrt{n}\ell)}|f(z)|\,dz.
\end{equation}
This pointwise estimate \eqref{pointwise5} together with Chebyshev's inequality yields
\begin{equation*}
\begin{split}
K_2(y,\ell)&\leq 2\cdot w(Q(y,\ell))^{1/{\alpha}-1/p-1/q}\bigg(\int_{Q(y,\ell)}\big|\mathcal T(f_2)(x)\big|^pw(x)\,dx\bigg)^{1/p}\\
&\leq C\cdot w(Q(y,\ell))^{1/{\alpha}-1/q}
\sum_{j=1}^\infty\frac{1}{|Q(y,2^{j+1}\sqrt{n}\ell)|}\int_{Q(y,2^{j+1}\sqrt{n}\ell)}|f(z)|\,dz.
\end{split}
\end{equation*}
Moreover, an application of H\"older's inequality gives us that
\begin{equation*}
\begin{split}
K_2(y,\ell)&\leq C\cdot w(Q(y,\ell))^{1/{\alpha}-1/q}
\sum_{j=1}^\infty\frac{1}{|Q(y,2^{j+1}\sqrt{n}\ell)|}\bigg(\int_{Q(y,2^{j+1}\sqrt{n}\ell)}|f(z)|^p\nu(z)\,dz\bigg)^{1/p}\\
&\times\bigg(\int_{Q(y,2^{j+1}\sqrt{n}\ell)}\nu(z)^{-p'/p}\,dz\bigg)^{1/{p'}}\\
&=C\sum_{j=1}^\infty w\big(Q(y,2^{j+1}\sqrt{n}\ell)\big)^{1/{\alpha}-1/p-1/q}\big\|f\cdot\chi_{Q(y,2^{j+1}\sqrt{n}\ell)}\big\|_{L^p(\nu)}\\
&\times\frac{w(Q(y,\ell))^{1/{\alpha}-1/q}}{w(Q(y,2^{j+1}\sqrt{n}\ell))^{1/{\alpha}-1/q}}
\cdot\frac{w(Q(y,2^{j+1}\sqrt{n}\ell))^{1/p}}{|Q(y,2^{j+1}\sqrt{n}\ell)|}\bigg(\int_{Q(y,2^{j+1}\sqrt{n}\ell)}\nu(z)^{-p'/p}\,dz\bigg)^{1/{p'}}.
\end{split}
\end{equation*}
In addition, we apply H\"older's inequality with exponent $r>1$ to get
\begin{align}\label{U2}
w\big(Q(y,2^{j+1}\sqrt{n}\ell)\big)&=\int_{Q(y,2^{j+1}\sqrt{n}\ell)}w(z)\,dz\notag\\
&\leq\big|Q(y,2^{j+1}\sqrt{n}\ell)\big|^{1/{r'}}\bigg(\int_{Q(y,2^{j+1}\sqrt{n}\ell)}w(z)^r\,dz\bigg)^{1/r}.
\end{align}
Consequently,
\begin{equation}\label{k2yr}
\begin{split}
K_2(y,\ell)&\leq C\sum_{j=1}^\infty w\big(Q(y,2^{j+1}\sqrt{n}\ell)\big)^{1/{\alpha}-1/p-1/q}\big\|f\cdot\chi_{Q(y,2^{j+1}\sqrt{n}\ell)}\big\|_{L^p(\nu)}
\cdot\frac{w(Q(y,\ell))^{1/{\alpha}-1/q}}{w(Q(y,2^{j+1}\sqrt{n}\ell))^{1/{\alpha}-1/q}}\\
&\times\frac{|Q(y,2^{j+1}\sqrt{n}\ell)|^{1/{(r'p)}}}{|Q(y,2^{j+1}\sqrt{n}\ell)|}
\bigg(\int_{Q(y,2^{j+1}\sqrt{n}\ell)}w(z)^r\,dz\bigg)^{1/{(rp)}}
\bigg(\int_{Q(y,2^{j+1}\sqrt{n}\ell)}\nu(z)^{-p'/p}\,dz\bigg)^{1/{p'}}\\
&\leq C\sum_{j=1}^\infty w\big(Q(y,2^{j+1}\sqrt{n}\ell)\big)^{1/{\alpha}-1/p-1/q}\big\|f\cdot\chi_{Q(y,2^{j+1}\sqrt{n}\ell)}\big\|_{L^p(\nu)}
\cdot\frac{w(Q(y,\ell))^{1/{\alpha}-1/q}}{w(Q(y,2^{j+1}\sqrt{n}\ell))^{1/{\alpha}-1/q}}.
\end{split}
\end{equation}
The last inequality is obtained by the $A_p$-type condition $(\S)$ on $(w,\nu)$. Furthermore, arguing as in the proof of Theorem \ref{mainthm:1}, we know that for any positive integer $j$, there exists a \emph{reverse doubling constant }$D=D(w)>1$ independent of $Q(y,\ell)$ such that
\begin{equation*}
w\big(Q(y,2^{j+1}\sqrt{n}\ell)\big)\geq D^{j+1}\cdot w(Q(y,\ell)).
\end{equation*}
Hence,
\begin{align}\label{5}
\sum_{j=1}^\infty\frac{w(Q(y,\ell))^{1/{\alpha}-1/q}}{w(Q(y,2^{j+1}\sqrt{n}\ell))^{1/{\alpha}-1/q}}
&\leq \sum_{j=1}^\infty\left(\frac{w(Q(y,\ell))}{D^{j+1}\cdot w(Q(y,\ell))}\right)^{1/{\alpha}-1/q}\notag\\
&=\sum_{j=1}^\infty\left(\frac{1}{D^{j+1}}\right)^{1/{\alpha}-1/q}\notag\\
&\leq C,
\end{align}
where the last series is convergent since the \emph{reverse doubling constant }$D>1$ and $1/{\alpha}-1/q>0$.
Therefore by taking the $L^q(\mu)$-norm of both sides of \eqref{K}(with respect to the variable $y$), and then using Minkowski's inequality, \eqref{k1yr} and \eqref{k2yr}, we have
\begin{equation*}
\begin{split}
&\Big\|w(Q(y,\ell))^{1/{\alpha}-1/p-1/q}\big\|\mathcal T(f)\cdot\chi_{Q(y,\ell)}\big\|_{WL^p(w)}\Big\|_{L^q(\mu)}\\
&\leq\big\|K_1(y,\ell)\big\|_{L^q(\mu)}+\big\|K_2(y,\ell)\big\|_{L^q(\mu)}\\
&\leq C\Big\|w(Q(y,2\sqrt{n}\ell))^{1/{\alpha}-1/p-1/q}\big\|f\cdot\chi_{Q(y,2\sqrt{n}\ell)}\big\|_{L^p(\nu)}\Big\|_{L^q(\mu)}\\
&+C\sum_{j=1}^\infty\Big\|w\big(Q(y,2^{j+1}\sqrt{n}\ell)\big)^{1/{\alpha}-1/p-1/q}\big\|f\cdot\chi_{Q(y,2^{j+1}\sqrt{n}\ell)}\big\|_{L^p(\nu)}\Big\|_{L^q(\mu)}
\times\frac{w(Q(y,\ell))^{1/{\alpha}-1/q}}{w(Q(y,2^{j+1}\sqrt{n}\ell))^{1/{\alpha}-1/q}}\\
&\leq C\big\|f\big\|_{(L^p,L^q)^{\alpha}(\nu,w;\mu)}+C\big\|f\big\|_{(L^p,L^q)^{\alpha}(\nu,w;\mu)}
\times\sum_{j=1}^\infty\frac{w(Q(y,\ell))^{1/{\alpha}-1/q}}{w(Q(y,2^{j+1}\sqrt{n}\ell))^{1/{\alpha}-1/q}}\\
&\leq C\big\|f\big\|_{(L^p,L^q)^{\alpha}(\nu,w;\mu)},
\end{split}
\end{equation*}
where the last inequality follows from \eqref{5}. By taking the supremum over all $\ell>0$, we finish the proof of Theorem \ref{mainthm:5}.
\end{proof}

\begin{proof}[Proof of Theorem $\ref{mainthm:6}$]
Let $1<p\leq\alpha<q\leq\infty$ and $f\in(L^p,L^q)^{\alpha}(\nu,w;\mu)$ with $w\in\Delta_2$ and $\mu\in\Delta_2$. For an arbitrary point $y\in\mathbb R^n$, we set $Q=Q(y,\ell)$ for the cube centered at $y$ and of the side length $\ell$.
Decompose $f$ as
\begin{equation*}
\begin{cases}
f=f_1+f_2\in (L^p,L^q)^{\alpha}(\nu,w;\mu);\  &\\
f_1=f\cdot\chi_{2Q};\  &\\
f_2=f\cdot\chi_{(2Q)^c},
\end{cases}
\end{equation*}
where $2Q=Q(y,2\sqrt{n}\ell)$. Then for given $y\in\mathbb R^n$ and $\ell>0$, we write
\begin{align}\label{Kp}
&w(Q(y,\ell))^{1/{\alpha}-1/p-1/q}\big\|\mathcal T_{\gamma}(f)\cdot\chi_{Q(y,\ell)}\big\|_{WL^p(w)}\notag\\
&\leq 2\cdot w(Q(y,\ell))^{1/{\alpha}-1/p-1/q}\big\|\mathcal T_{\gamma}(f_1)\cdot\chi_{Q(y,\ell)}\big\|_{WL^p(w)}\notag\\
&+2\cdot w(Q(y,\ell))^{1/{\alpha}-1/p-1/q}\big\|\mathcal T_{\gamma}(f_2)\cdot\chi_{Q(y,\ell)}\big\|_{WL^p(w)}\notag\\
&:=K'_1(y,\ell)+K'_2(y,\ell).
\end{align}
Let us consider the first term $K'_1(y,\ell)$. Using the assumption \eqref{assump2.2} and inequality \eqref{doubling3}, we get
\begin{align}\label{kp1yr}
K'_1(y,\ell)&\leq 2\cdot w(Q(y,\ell))^{1/{\alpha}-1/p-1/q}\big\|\mathcal T_\gamma(f_1)\big\|_{WL^p(w)}\notag\\
&\leq C\cdot w(Q(y,\ell))^{1/{\alpha}-1/p-1/q}
\bigg(\int_{Q(y,2\sqrt{n}\ell)}|f(x)|^p\nu(x)\,dx\bigg)^{1/p}\notag\\
&=C\cdot w(Q(y,2\sqrt{n}\ell))^{1/{\alpha}-1/p-1/q}\big\|f\cdot\chi_{Q(y,2\sqrt{n}\ell)}\big\|_{L^p(\nu)}\notag\\
&\times \frac{w(Q(y,\ell))^{1/{\alpha}-1/p-1/q}}{w(Q(y,2\sqrt{n}\ell))^{1/{\alpha}-1/p-1/q}}\notag\\
&\leq C\cdot w(Q(y,2\sqrt{n}\ell))^{1/{\alpha}-1/p-1/q}\big\|f\cdot\chi_{Q(y,2\sqrt{n}\ell)}\big\|_{L^p(\nu)}.
\end{align}
We now estimate the second term $K'_2(y,\ell)$. Recall that by the size condition \eqref{frac sublinear}, the following estimate holds for any $x\in Q(y,\ell)$,
\begin{equation}\label{alpha1}
\big|\mathcal T_{\gamma}(f_2)(x)\big|\leq C\sum_{j=1}^\infty\frac{1}{|Q(y,2^{j+1}\sqrt{n}\ell)|^{1-{\gamma}/n}}\int_{Q(y,2^{j+1}\sqrt{n}\ell)}|f(z)|\,dz.
\end{equation}
This pointwise estimate \eqref{alpha1} along with Chebyshev's inequality implies
\begin{equation*}
\begin{split}
K'_2(y,\ell)&\leq 2\cdot w(Q(y,\ell))^{1/{\alpha}-1/p-1/q}\bigg(\int_{Q(y,\ell)}\big|\mathcal T_{\gamma}(f_2)(x)\big|^pw(x)\,dx\bigg)^{1/p}\\
&\leq C\cdot w(Q(y,\ell))^{1/{\alpha}-1/q}
\sum_{j=1}^\infty\frac{1}{|Q(y,2^{j+1}\sqrt{n}\ell)|^{1-{\gamma}/n}}\int_{Q(y,2^{j+1}\sqrt{n}\ell)}|f(z)|\,dz.
\end{split}
\end{equation*}
A further application of H\"older's inequality yields
\begin{equation*}
\begin{split}
K'_2(y,\ell)&\leq C\cdot w(Q(y,\ell))^{1/{\alpha}-1/q}
\sum_{j=1}^\infty\frac{1}{|Q(y,2^{j+1}\sqrt{n}\ell)|^{1-{\gamma}/n}}\bigg(\int_{Q(y,2^{j+1}\sqrt{n}\ell)}|f(z)|^p\nu(z)\,dz\bigg)^{1/p}\\
&\times\bigg(\int_{Q(y,2^{j+1}\sqrt{n}\ell)}\nu(z)^{-p'/p}\,dz\bigg)^{1/{p'}}\\
&=C\sum_{j=1}^\infty w\big(Q(y,2^{j+1}\sqrt{n}\ell)\big)^{1/{\alpha}-1/p-1/q}\big\|f\cdot\chi_{Q(y,2^{j+1}\sqrt{n}\ell)}\big\|_{L^p(\nu)}\\
&\times\frac{w(Q(y,\ell))^{1/{\alpha}-1/q}}{w(Q(y,2^{j+1}\sqrt{n}\ell))^{1/{\alpha}-1/q}}
\cdot\frac{w(Q(y,2^{j+1}\sqrt{n}\ell))^{1/p}}{|Q(y,2^{j+1}\sqrt{n}\ell)|^{1-{\gamma}/n}}\\
&\times\bigg(\int_{Q(y,2^{j+1}\sqrt{n}\ell)}\nu(z)^{-p'/p}\,dz\bigg)^{1/{p'}}.
\end{split}
\end{equation*}
Hence, in view of \eqref{U2}, we have
\begin{equation}\label{kp2yr}
\begin{split}
K'_2(y,\ell)&\leq C\sum_{j=1}^\infty w\big(Q(y,2^{j+1}\sqrt{n}\ell)\big)^{1/{\alpha}-1/p-1/q}\big\|f\cdot\chi_{Q(y,2^{j+1}\sqrt{n}\ell)}\big\|_{L^p(\nu)}
\cdot\frac{w(Q(y,\ell))^{1/{\alpha}-1/q}}{w(Q(y,2^{j+1}\sqrt{n}\ell))^{1/{\alpha}-1/q}}\\
&\times\frac{|Q(y,2^{j+1}\sqrt{n}\ell)|^{1/{(r'p)}}}{|Q(y,2^{j+1}\sqrt{n}\ell)|^{1-{\gamma}/n}}
\bigg(\int_{Q(y,2^{j+1}\sqrt{n}\ell)}w(z)^r\,dz\bigg)^{1/{(rp)}}
\bigg(\int_{Q(y,2^{j+1}\sqrt{n}\ell)}\nu(z)^{-p'/p}\,dz\bigg)^{1/{p'}}\\
&\leq C\sum_{j=1}^\infty w\big(Q(y,2^{j+1}\sqrt{n}\ell)\big)^{1/{\alpha}-1/p-1/q}\big\|f\cdot\chi_{Q(y,2^{j+1}\sqrt{n}\ell)}\big\|_{L^p(\nu)}
\cdot\frac{w(Q(y,\ell))^{1/{\alpha}-1/q}}{w(Q(y,2^{j+1}\sqrt{n}\ell))^{1/{\alpha}-1/q}}.
\end{split}
\end{equation}
The last inequality is obtained by the $A_p$-type condition $(\S')$ on $(w,\nu)$. Therefore by taking the $L^q(\mu)$-norm of both sides of \eqref{Kp}(with respect to the variable $y$), and then using Minkowski's inequality, \eqref{kp1yr} and \eqref{kp2yr}, we obtain
\begin{equation*}
\begin{split}
&\Big\|w(Q(y,\ell))^{1/{\alpha}-1/p-1/q}\big\|\mathcal T_{\gamma}(f)\cdot\chi_{Q(y,\ell)}\big\|_{WL^p(w)}\Big\|_{L^q(\mu)}\\
&\leq\big\|K'_1(y,\ell)\big\|_{L^q(\mu)}+\big\|K'_2(y,\ell)\big\|_{L^q(\mu)}\\
&\leq C\Big\|w(Q(y,2\sqrt{n}\ell))^{1/{\alpha}-1/p-1/q}\big\|f\cdot\chi_{Q(y,2\sqrt{n}\ell)}\big\|_{L^p(\nu)}\Big\|_{L^q(\mu)}\\
&+C\sum_{j=1}^\infty\Big\|w\big(Q(y,2^{j+1}\sqrt{n}\ell)\big)^{1/{\alpha}-1/p-1/q}\big\|f\cdot\chi_{Q(y,2^{j+1}\sqrt{n}\ell)}\big\|_{L^p(\nu)}\Big\|_{L^q(\mu)}
\times\frac{w(Q(y,\ell))^{1/{\alpha}-1/q}}{w(Q(y,2^{j+1}\sqrt{n}\ell))^{1/{\alpha}-1/q}}\\
&\leq C\big\|f\big\|_{(L^p,L^q)^{\alpha}(\nu,w;\mu)}+C\big\|f\big\|_{(L^p,L^q)^{\alpha}(\nu,w;\mu)}
\times\sum_{j=1}^\infty\frac{w(Q(y,\ell))^{1/{\alpha}-1/q}}{w(Q(y,2^{j+1}\sqrt{n}\ell))^{1/{\alpha}-1/q}}\\
&\leq C\big\|f\big\|_{(L^p,L^q)^{\alpha}(\nu,w;\mu)},
\end{split}
\end{equation*}
where the last inequality follows from \eqref{5}. Thus, by taking the supremum over all $\ell>0$, we complete the proof of Theorem \ref{mainthm:6}.
\end{proof}

\section{Proofs of Theorems \ref{mainthm:7} and \ref{mainthm:8}}

\begin{proof}[Proof of Theorem $\ref{mainthm:7}$]
Let $1<p\leq\alpha<q\leq\infty$ and $f\in(L^p,L^q)^{\alpha}(\nu,w;\mu)$ with $w\in A_\infty$ and $\mu\in\Delta_2$. For any fixed cube $Q=Q(y,\ell)$ in $\mathbb R^n$, as before, we decompose $f$ as
\begin{equation*}
f=f_1+f_2,\qquad f_1=f\cdot\chi_{2Q},\quad  f_2=f\cdot\chi_{(2Q)^c},
\end{equation*}
where $2Q=Q(y,2\sqrt{n}\ell)$. Then for given $y\in\mathbb R^n$ and $\ell>0$, we write
\begin{align}\label{L}
&w(Q(y,\ell))^{1/{\alpha}-1/p-1/q}\big\|[b,\mathcal T](f)\cdot\chi_{Q(y,\ell)}\big\|_{WL^p(w)}\notag\\
&\leq 2\cdot w(Q(y,\ell))^{1/{\alpha}-1/p-1/q}\big\|[b,\mathcal T](f_1)\cdot\chi_{Q(y,\ell)}\big\|_{WL^p(w)}\notag\\
&+2\cdot w(Q(y,\ell))^{1/{\alpha}-1/p-1/q}\big\|[b,\mathcal T](f_2)\cdot\chi_{Q(y,\ell)}\big\|_{WL^p(w)}\notag\\
&:=L_1(y,\ell)+L_2(y,\ell).
\end{align}
Since $w\in A_\infty$, we know that $w\in\Delta_2$. By the assumption \eqref{assump3.2} and inequality (\ref{doubling3}), then we have
\begin{align}\label{L1}
L_1(y,\ell)&\leq 2\cdot w(Q(y,\ell))^{1/{\alpha}-1/p-1/q}\big\|[b,\mathcal T](f_1)\big\|_{WL^p(w)}\notag\\
&\leq C\cdot w(Q(y,\ell))^{1/{\alpha}-1/p-1/q}
\bigg(\int_{Q(y,2\sqrt{n}\ell)}|f(x)|^p\nu(x)\,dx\bigg)^{1/p}\notag\\
&=C\cdot w(Q(y,2\sqrt{n}\ell))^{1/{\alpha}-1/p-1/q}\big\|f\cdot\chi_{Q(y,2\sqrt{n}\ell)}\big\|_{L^p(\nu)}\notag\\
&\times \frac{w(Q(y,\ell))^{1/{\alpha}-1/p-1/q}}{w(Q(y,2\sqrt{n}\ell))^{1/{\alpha}-1/p-1/q}}\notag\\
&\leq C\cdot w(Q(y,2\sqrt{n}\ell))^{1/{\alpha}-1/p-1/q}\big\|f\cdot\chi_{Q(y,2\sqrt{n}\ell)}\big\|_{L^p(\nu)}.
\end{align}
Next we estimate $L_2(y,\ell)$. For any $x\in Q(y,\ell)$, from the condition \eqref{sublinear commutator}, we can see that
\begin{equation*}
\begin{split}
\big|[b,\mathcal T](f_2)(x)\big|
&\leq \big|b(x)-b_{Q(y,\ell)}\big|\cdot\big|\mathcal T(f_2)(x)\big|
+\Big|\mathcal T\big([b_{Q(y,\ell)}-b]f_2\big)(x)\Big|\\
&:=\xi_\ast(x)+\eta_\ast(x).
\end{split}
\end{equation*}
Then we have
\begin{equation*}
\begin{split}
L_2(y,\ell)\leq&4\cdot w(Q(y,\ell))^{1/{\alpha}-1/p-1/q}\big\|\xi_\ast(\cdot)\cdot\chi_{Q(y,\ell)}\big\|_{WL^p(w)}\\
&+4\cdot w(Q(y,\ell))^{1/{\alpha}-1/p-1/q}\big\|\eta_\ast(\cdot)\cdot\chi_{Q(y,\ell)}\big\|_{WL^p(w)}\\
:=&L_3(y,\ell)+L_4(y,\ell).
\end{split}
\end{equation*}
For the term $L_3(y,\ell)$, it follows directly from Chebyshev's inequality and estimate \eqref{pointwise5} that
\begin{equation*}
\begin{split}
L_3(y,\ell)&\leq4\cdot w(Q(y,\ell))^{1/{\alpha}-1/p-1/q}\bigg(\int_{Q(y,\ell)}\big|\xi_\ast(x)\big|^pw(x)\,dx\bigg)^{1/p}\\
&\leq C\cdot w(Q(y,\ell))^{1/{\alpha}-1/p-1/q}\bigg(\int_{Q(y,\ell)}\big|b(x)-b_{Q(y,\ell)}\big|^pw(x)\,dx\bigg)^{1/p}\\
&\times\sum_{j=1}^\infty\frac{1}{|Q(y,2^{j+1}\sqrt{n}\ell)|}\int_{Q(y,2^{j+1}\sqrt{n}\ell)}|f(z)|\,dz\\
&\leq C\|b\|_*\cdot w(Q(y,\ell))^{1/{\alpha}-1/q}
\sum_{j=1}^\infty\frac{1}{|Q(y,2^{j+1}\sqrt{n}\ell)|}\int_{Q(y,2^{j+1}\sqrt{n}\ell)}|f(z)|\,dz,
\end{split}
\end{equation*}
where in the last inequality we have used the fact that $w\in A_{\infty}$ and Lemma \ref{BMO}$(ii)$. We can now argue exactly as we did in the proof of Theorem \ref{mainthm:5} to get
\begin{equation*}
\begin{split}
L_3(y,\ell)&\leq C\sum_{j=1}^\infty w\big(Q(y,2^{j+1}\sqrt{n}\ell)\big)^{1/{\alpha}-1/p-1/q}\big\|f\cdot\chi_{Q(y,2^{j+1}\sqrt{n}\ell)}\big\|_{L^p(\nu)}
\cdot\frac{w(Q(y,\ell))^{1/{\alpha}-1/q}}{w(Q(y,2^{j+1}\sqrt{n}\ell))^{1/{\alpha}-1/q}}.
\end{split}
\end{equation*}
For the term $L_4(y,\ell)$, as it was shown in Theorem \ref{mainthm:3}, the following pointwise inequality holds by the size condition \eqref{sublinear}.
\begin{equation*}
\begin{split}
\eta_\ast(x)&=\Big|\mathcal T\big([b_{Q(y,\ell)}-b]f_2\big)(x)\Big|\\
&\leq C\sum_{j=1}^\infty\frac{1}{|Q(y,2^{j+1}\sqrt{n}\ell)|}\int_{Q(y,2^{j+1}\sqrt{n}\ell)}\big|b(z)-b_{Q(y,\ell)}\big|\cdot|f(z)|\,dz.
\end{split}
\end{equation*}
This, together with Chebyshev's inequality, yields
\begin{equation*}
\begin{split}
L_4(y,\ell)&\leq4\cdot w(Q(y,\ell))^{1/{\alpha}-1/p-1/q}\bigg(\int_{Q(y,\ell)}\big|\eta_\ast(x)\big|^pw(x)\,dx\bigg)^{1/p}\\
&\leq C\cdot w(Q(y,\ell))^{1/{\alpha}-1/q}
\sum_{j=1}^\infty\frac{1}{|Q(y,2^{j+1}\sqrt{n}\ell)|}\int_{Q(y,2^{j+1}\sqrt{n}\ell)}\big|b(z)-b_{Q(y,\ell)}\big|\cdot|f(z)|\,dz\\
&\leq C\cdot w(Q(y,\ell))^{1/{\alpha}-1/q}
\sum_{j=1}^\infty\frac{1}{|Q(y,2^{j+1}\sqrt{n}\ell)|}\int_{Q(y,2^{j+1}\sqrt{n}\ell)}\big|b(z)-b_{Q(y,2^{j+1}\sqrt{n}\ell)}\big|\cdot|f(z)|\,dz\\
&+C\cdot w(Q(y,\ell))^{1/{\alpha}-1/q}
\sum_{j=1}^\infty\frac{1}{|Q(y,2^{j+1}\sqrt{n}\ell)|}\int_{Q(y,2^{j+1}\sqrt{n}\ell)}\big|b_{Q(y,2^{j+1}\sqrt{n}\ell)}-b_{Q(y,\ell)}\big|\cdot|f(z)|\,dz\\
&:=L_5(y,\ell)+L_6(y,\ell).
\end{split}
\end{equation*}
An application of H\"older's inequality leads to that
\begin{equation*}
\begin{split}
L_5(y,\ell)&\leq C\cdot w(Q(y,\ell))^{1/{\alpha}-1/q}
\sum_{j=1}^\infty\frac{1}{|Q(y,2^{j+1}\sqrt{n}\ell)|}\bigg(\int_{Q(y,2^{j+1}\sqrt{n}\ell)}|f(z)|^p\nu(z)\,dz\bigg)^{1/p}\\
&\times\bigg(\int_{Q(y,2^{j+1}\sqrt{n}\ell)}\big|b(z)-b_{Q(y,2^{j+1}\sqrt{n}\ell)}\big|^{p'}\nu(z)^{-p'/p}\,dz\bigg)^{1/{p'}}\\
&\leq C\cdot w(Q(y,\ell))^{1/{\alpha}-1/q}\sum_{j=1}^\infty\frac{\big\|f\cdot\chi_{Q(y,2^{j+1}\sqrt{n}\ell)}\big\|_{L^p(\nu)}}{|Q(y,2^{j+1}\sqrt{n}\ell)|}\\
&\times\big|Q(y,2^{j+1}\sqrt{n}\ell)\big|^{1/{p'}}\Big\|\big[b-b_{Q(y,2^{j+1}\sqrt{n}\ell)}\big]\cdot \nu^{-1/p}\Big\|_{\mathcal C,Q(y,2^{j+1}\sqrt{n}\ell)},
\end{split}
\end{equation*}
where $\mathcal C(t)=t^{p'}$ is a Young function. Recall that the following inequality holds by generalized H\"older's inequality and the estimate \eqref{Jensen}:
\begin{align}\label{final}
\Big\|\big[b-b_{Q(y,2^{j+1}\sqrt{n}\ell)}\big]\cdot\nu^{-1/p}\Big\|_{\mathcal C,Q(y,2^{j+1}\sqrt{n}\ell)}
&\leq C\Big\|b-b_{Q(y,2^{j+1}\sqrt{n}\ell)}\Big\|_{\mathcal B,Q(y,2^{j+1}\sqrt{n}\ell)}\cdot\Big\|\nu^{-1/p}\Big\|_{\mathcal A,Q(y,2^{j+1}\sqrt{n}\ell)}\notag\\
&\leq C\|b\|_*\cdot\Big\|\nu^{-1/p}\Big\|_{\mathcal A,Q(y,2^{j+1}\sqrt{n}\ell)}.
\end{align}
where
\begin{equation*}
\mathcal A(t)\approx t^{p'}\big[\log(e+t)\big]^{p'}\qquad \&\qquad \mathcal B(t)\approx \exp(t)-1.
\end{equation*}
Consequently,
\begin{equation*}
\begin{split}
L_5(y,\ell)&\leq C\|b\|_*\cdot w(Q(y,\ell))^{1/{\alpha}-1/q}
\sum_{j=1}^\infty\frac{\big\|f\cdot\chi_{Q(y,2^{j+1}\sqrt{n}\ell)}\big\|_{L^p(\nu)}}{|Q(y,2^{j+1}\sqrt{n}\ell)|^{1/p}}
\cdot\Big\|\nu^{-1/p}\Big\|_{\mathcal A,Q(y,2^{j+1}\sqrt{n}\ell)}\\
&=C\|b\|_*\sum_{j=1}^\infty w\big(Q(y,2^{j+1}\sqrt{n}\ell)\big)^{1/{\alpha}-1/p-1/q}\big\|f\cdot\chi_{Q(y,2^{j+1}\sqrt{n}\ell)}\big\|_{L^p(\nu)}
\cdot\frac{w(Q(y,\ell))^{1/{\alpha}-1/q}}{w(Q(y,2^{j+1}\sqrt{n}\ell))^{1/{\alpha}-1/q}}\\
&\times\frac{w(Q(y,2^{j+1}\sqrt{n}\ell))^{1/p}}{|Q(y,2^{j+1}\sqrt{n}\ell)|^{1/p}}
\cdot\Big\|\nu^{-1/p}\Big\|_{\mathcal A,Q(y,2^{j+1}\sqrt{n}\ell)}.
\end{split}
\end{equation*}
Moreover, in view of \eqref{U2}, we can deduce that
\begin{equation*}
\begin{split}
L_5(y,\ell)
&\leq C\|b\|_*\sum_{j=1}^\infty w\big(Q(y,2^{j+1}\sqrt{n}\ell)\big)^{1/{\alpha}-1/p-1/q}\big\|f\cdot\chi_{Q(y,2^{j+1}\sqrt{n}\ell)}\big\|_{L^p(\nu)}
\cdot\frac{w(Q(y,\ell))^{1/{\alpha}-1/q}}{w(Q(y,2^{j+1}\sqrt{n}\ell))^{1/{\alpha}-1/q}}\\
&\times\frac{|Q(y,2^{j+1}\sqrt{n}\ell)|^{1/{(r'p)}}}{|Q(y,2^{j+1}\sqrt{n}\ell)|^{1/p}}
\bigg(\int_{Q(y,2^{j+1}\sqrt{n}\ell)}w(z)^r\,dz\bigg)^{1/{(rp)}}\cdot\Big\|\nu^{-1/p}\Big\|_{\mathcal A,Q(y,2^{j+1}\sqrt{n}\ell)}\\
&\leq C\|b\|_*\sum_{j=1}^\infty w\big(Q(y,2^{j+1}\sqrt{n}\ell)\big)^{1/{\alpha}-1/p-1/q}\big\|f\cdot\chi_{Q(y,2^{j+1}\sqrt{n}\ell)}\big\|_{L^p(\nu)}
\cdot\frac{w(Q(y,\ell))^{1/{\alpha}-1/q}}{w(Q(y,2^{j+1}\sqrt{n}\ell))^{1/{\alpha}-1/q}}.
\end{split}
\end{equation*}
The last inequality is obtained by the $A_p$-type condition $(\S\S)$ on $(w,\nu)$. Let us now estimate the last term $L_6(y,\ell)$. Applying Lemma \ref{BMO}$(i)$ and H\"older's inequality, we get
\begin{equation*}
\begin{split}
L_6(y,\ell)&\leq C\cdot w(Q(y,\ell))^{1/{\alpha}-1/q}
\sum_{j=1}^\infty\frac{(j+1)\|b\|_*}{|Q(y,2^{j+1}\sqrt{n}\ell)|}\int_{Q(y,2^{j+1}\sqrt{n}\ell)}|f(z)|\,dz\\
&\leq C\cdot w(Q(y,\ell))^{1/{\alpha}-1/q}
\sum_{j=1}^\infty\frac{(j+1)\|b\|_*}{|Q(y,2^{j+1}\sqrt{n}\ell)|}\bigg(\int_{Q(y,2^{j+1}\sqrt{n}\ell)}|f(z)|^p\nu(z)\,dz\bigg)^{1/p}\\
&\times\bigg(\int_{Q(y,2^{j+1}\sqrt{n}\ell)}\nu(z)^{-p'/p}\,dz\bigg)^{1/{p'}}\\
&=C\|b\|_*\sum_{j=1}^\infty w\big(Q(y,2^{j+1}\sqrt{n}\ell)\big)^{1/{\alpha}-1/p-1/q}\big\|f\cdot\chi_{Q(y,2^{j+1}\sqrt{n}\ell)}\big\|_{L^p(\nu)}\\
&\times\big(j+1\big)\cdot\frac{w(Q(y,\ell))^{1/{\alpha}-1/q}}{w(Q(y,2^{j+1}\sqrt{n}\ell))^{1/{\alpha}-1/q}}
\cdot\frac{w(Q(y,2^{j+1}\sqrt{n}\ell))^{1/p}}{|Q(y,2^{j+1}\sqrt{n}\ell)|}\\
&\times\bigg(\int_{Q(y,2^{j+1}\sqrt{n}\ell)}\nu(z)^{-p'/p}\,dz\bigg)^{1/{p'}}.
\end{split}
\end{equation*}
As it was pointed out in Theorem \ref{mainthm:3} that the condition $(\S\S)$ is stronger than the condition $(\S)$. Taking into account this fact and \eqref{U2}, we obtain
\begin{equation*}
\begin{split}
L_6(y,\ell)&\leq C\|b\|_*\sum_{j=1}^\infty w\big(Q(y,2^{j+1}\sqrt{n}\ell)\big)^{1/{\alpha}-1/p-1/q}\big\|f\cdot\chi_{Q(y,2^{j+1}\sqrt{n}\ell)}\big\|_{L^p(\nu)}\\
&\times\big(j+1\big)\cdot\frac{w(Q(y,\ell))^{1/{\alpha}-1/q}}{w(Q(y,2^{j+1}\sqrt{n}\ell))^{1/{\alpha}-1/q}}\\
&\times\frac{|Q(y,2^{j+1}\sqrt{n}\ell)|^{1/{(r'p)}}}{|Q(y,2^{j+1}\sqrt{n}\ell)|}
\bigg(\int_{Q(y,2^{j+1}\sqrt{n}\ell)}w(z)^r\,dz\bigg)^{1/{(rp)}}
\bigg(\int_{Q(y,2^{j+1}\sqrt{n}\ell)}\nu(z)^{-p'/p}\,dz\bigg)^{1/{p'}}\\
&\leq C\|b\|_*\sum_{j=1}^\infty w\big(Q(y,2^{j+1}\sqrt{n}\ell)\big)^{1/{\alpha}-1/p-1/q}\big\|f\cdot\chi_{Q(y,2^{j+1}\sqrt{n}\ell)}\big\|_{L^p(\nu)}\\
&\times\big(j+1\big)\cdot\frac{w(Q(y,\ell))^{1/{\alpha}-1/q}}{w(Q(y,2^{j+1}\sqrt{n}\ell))^{1/{\alpha}-1/q}}.
\end{split}
\end{equation*}
Summing up all the above estimates, we get
\begin{align}\label{L2}
L_2(y,\ell)&\leq C\sum_{j=1}^\infty w\big(Q(y,2^{j+1}\sqrt{n}\ell)\big)^{1/{\alpha}-1/p-1/q}\big\|f\cdot\chi_{Q(y,2^{j+1}\sqrt{n}\ell)}\big\|_{L^p(\nu)}\notag\\
&\times\big(j+1\big)\cdot\frac{w(Q(y,\ell))^{1/{\alpha}-1/q}}{w(Q(y,2^{j+1}\sqrt{n}\ell))^{1/{\alpha}-1/q}}.
\end{align}
Moreover, by our hypothesis on $w:w\in A_\infty$ and inequality \eqref{compare} with exponent $\delta^\ast>0$, we compute
\begin{align}\label{6}
\sum_{j=1}^\infty\big(j+1\big)\cdot\frac{w(Q(y,\ell))^{1/{\alpha}-1/q}}{w(Q(y,2^{j+1}\sqrt{n}\ell))^{1/{\alpha}-1/q}}
&\leq C\sum_{j=1}^\infty(j+1)\cdot\left(\frac{|Q(y,\ell)|}{|Q(y,2^{j+1}\sqrt{n}\ell)|}\right)^{\delta^\ast(1/{\alpha}-1/q)}\notag\\
&=C\sum_{j=1}^\infty(j+1)\cdot\left(\frac{1}{2^{(j+1)n}}\right)^{\delta^\ast(1/{\alpha}-1/q)}\notag\\
&\leq C.
\end{align}
Notice that the exponent $\delta^\ast(1/{\alpha}-1/q)$ is positive because $\alpha<q$, which guarantees that the last series is convergent. Thus, by taking the $L^q(\mu)$-norm of both sides of \eqref{L}(with respect to the variable $y$), and then using Minkowski's inequality, \eqref{L1} and \eqref{L2}, we obtain
\begin{equation*}
\begin{split}
&\Big\|w(Q(y,\ell))^{1/{\alpha}-1/p-1/q}\big\|[b,\mathcal T](f)\cdot\chi_{Q(y,\ell)}\big\|_{WL^p(w)}\Big\|_{L^q(\mu)}\\
&\leq\big\|L_1(y,\ell)\big\|_{L^q(\mu)}+\big\|L_2(y,\ell)\big\|_{L^q(\mu)}\\
&\leq C\Big\|w(Q(y,2\sqrt{n}\ell))^{1/{\alpha}-1/p-1/q}\big\|f\cdot\chi_{Q(y,2\sqrt{n}\ell)}\big\|_{L^p(\nu)}\Big\|_{L^q(\mu)}\\
\end{split}
\end{equation*}
\begin{equation*}
\begin{split}
&+C\sum_{j=1}^\infty\Big\|w\big(Q(y,2^{j+1}\sqrt{n}\ell)\big)^{1/{\alpha}-1/p-1/q}\big\|f\cdot\chi_{Q(y,2^{j+1}\sqrt{n}\ell)}\big\|_{L^p(\nu)}\Big\|_{L^q(\mu)}\\
&\times\big(j+1\big)\cdot\frac{w(Q(y,\ell))^{1/{\alpha}-1/q}}{w(Q(y,2^{j+1}\sqrt{n}\ell))^{1/{\alpha}-1/q}}\\
&\leq C\big\|f\big\|_{(L^p,L^q)^{\alpha}(\nu,w;\mu)}+C\big\|f\big\|_{(L^p,L^q)^{\alpha}(\nu,w;\mu)}
\times\sum_{j=1}^\infty\big(j+1\big)\cdot\frac{w(Q(y,\ell))^{1/{\alpha}-1/q}}{w(Q(y,2^{j+1}\sqrt{n}\ell))^{1/{\alpha}-1/q}}\\
&\leq C\big\|f\big\|_{(L^p,L^q)^{\alpha}(\nu,w;\mu)},
\end{split}
\end{equation*}
where the last inequality is due to \eqref{6}. We conclude the proof of Theorem \ref{mainthm:7} by taking the supremum over all $\ell>0$.
\end{proof}

\begin{proof}[Proof of Theorem $\ref{mainthm:8}$]
Let $1<p\leq\alpha<q\leq\infty$ and $f\in(L^p,L^q)^{\alpha}(\nu,w;\mu)$ with $w\in A_\infty$ and $\mu\in\Delta_2$. For any fixed cube $Q=Q(y,\ell)$ in $\mathbb R^n$, as usual, we decompose $f$ as
\begin{equation*}
f=f_1+f_2,\qquad f_1=f\cdot\chi_{2Q},\quad  f_2=f\cdot\chi_{(2Q)^c},
\end{equation*}
where $2Q=Q(y,2\sqrt{n}\ell)$. Then for given $y\in\mathbb R^n$ and $\ell>0$, we write
\begin{align}\label{Lprime}
&w(Q(y,\ell))^{1/{\alpha}-1/p-1/q}\big\|[b,\mathcal T_{\gamma}](f)\cdot\chi_{Q(y,\ell)}\big\|_{WL^p(w)}\notag\\
&\leq 2\cdot w(Q(y,\ell))^{1/{\alpha}-1/p-1/q}\big\|[b,\mathcal T_{\gamma}](f_1)\cdot\chi_{Q(y,\ell)}\big\|_{WL^p(w)}\notag\\
&+2\cdot w(Q(y,\ell))^{1/{\alpha}-1/p-1/q}\big\|[b,\mathcal T_{\gamma}](f_2)\cdot\chi_{Q(y,\ell)}\big\|_{WL^p(w)}\notag\\
&:=L'_1(y,\ell)+L'_2(y,\ell).
\end{align}
Since $w\in A_\infty$, we know that $w\in\Delta_2$. By the assumption \eqref{assump4.2} and inequality (\ref{doubling3}), we get
\begin{align}\label{L1prime}
L'_1(y,\ell)&\leq 2\cdot w(Q(y,\ell))^{1/{\alpha}-1/p-1/q}\big\|[b,\mathcal T_{\gamma}](f_1)\big\|_{WL^p(w)}\notag\\
&\leq C\cdot w(Q(y,\ell))^{1/{\alpha}-1/p-1/q}
\bigg(\int_{Q(y,2\sqrt{n}\ell)}|f(x)|^p\nu(x)\,dx\bigg)^{1/p}\notag\\
&=C\cdot w(Q(y,2\sqrt{n}\ell))^{1/{\alpha}-1/p-1/q}\big\|f\cdot\chi_{Q(y,2\sqrt{n}\ell)}\big\|_{L^p(\nu)}\notag\\
&\times \frac{w(Q(y,\ell))^{1/{\alpha}-1/p-1/q}}{w(Q(y,2\sqrt{n}\ell))^{1/{\alpha}-1/p-1/q}}\notag\\
&\leq C\cdot w(Q(y,2\sqrt{n}\ell))^{1/{\alpha}-1/p-1/q}\big\|f\cdot\chi_{Q(y,2\sqrt{n}\ell)}\big\|_{L^p(\nu)}.
\end{align}
Next we estimate the other term $L'_2(y,\ell)$. For any $x\in Q(y,\ell)$, from the condition \eqref{frac sublinear commutator}, one can see that
\begin{equation*}
\begin{split}
\big|[b,\mathcal T_{\gamma}](f_2)(x)\big|
&\leq \big|b(x)-b_{Q(y,\ell)}\big|\cdot\big|\mathcal T_\gamma(f_2)(x)\big|
+\Big|\mathcal T_\gamma\big([b_{Q(y,\ell)}-b]f_2\big)(x)\Big|\\
&:=\widetilde{\xi}_\ast(x)+\widetilde{\eta}_\ast(x).
\end{split}
\end{equation*}
Consequently, we can further divide $L'_2(y,\ell)$ into two parts:
\begin{equation*}
\begin{split}
L'_2(y,\ell)\leq&4\cdot w(Q(y,\ell))^{1/{\alpha}-1/p-1/q}\big\|\widetilde{\xi}_\ast(\cdot)\cdot\chi_{Q(y,\ell)}\big\|_{WL^p(w)}\\
&+4\cdot w(Q(y,\ell))^{1/{\alpha}-1/p-1/q}\big\|\widetilde{\eta}_\ast(\cdot)\cdot\chi_{Q(y,\ell)}\big\|_{WL^p(w)}\\
:=&L'_3(y,\ell)+L'_4(y,\ell).
\end{split}
\end{equation*}
For the term $L'_3(y,\ell)$, it follows directly from Chebyshev's inequality and estimate \eqref{alpha1} that
\begin{equation*}
\begin{split}
L'_3(y,\ell)&\leq4\cdot w(Q(y,\ell))^{1/{\alpha}-1/p-1/q}\bigg(\int_{Q(y,\ell)}\big|\widetilde{\xi}_\ast(x)\big|^pw(x)\,dx\bigg)^{1/p}\\
&\leq C\cdot w(Q(y,\ell))^{1/{\alpha}-1/p-1/q}\bigg(\int_{Q(y,\ell)}\big|b(x)-b_{Q(y,\ell)}\big|^pw(x)\,dx\bigg)^{1/p}\\
&\times\sum_{j=1}^\infty\frac{1}{|Q(y,2^{j+1}\sqrt{n}\ell)|^{1-\gamma/n}}\int_{Q(y,2^{j+1}\sqrt{n}\ell)}|f(z)|\,dz\\
&\leq C\cdot w(Q(y,\ell))^{1/{\alpha}-1/q}
\sum_{j=1}^\infty\frac{1}{|Q(y,2^{j+1}\sqrt{n}\ell)|^{1-\gamma/n}}\int_{Q(y,2^{j+1}\sqrt{n}\ell)}|f(z)|\,dz,
\end{split}
\end{equation*}
where in the last inequality we have used the fact that $w\in A_{\infty}$ and Lemma \ref{BMO}$(ii)$. Arguing as in the proof of Theorem \ref{mainthm:6}, we can also obtain that
\begin{equation*}
L'_3(y,\ell)\leq C\sum_{j=1}^\infty w\big(Q(y,2^{j+1}\sqrt{n}\ell)\big)^{1/{\alpha}-1/p-1/q}\big\|f\cdot\chi_{Q(y,2^{j+1}\sqrt{n}\ell)}\big\|_{L^p(\nu)}
\cdot\frac{w(Q(y,\ell))^{1/{\alpha}-1/q}}{w(Q(y,2^{j+1}\sqrt{n}\ell))^{1/{\alpha}-1/q}}.
\end{equation*}
Let us now estimate the term $L'_4(y,\ell)$. As it was proved in Theorem \ref{mainthm:4}, the following pointwise estimate holds by the size condition \eqref{frac sublinear}.
\begin{equation*}
\begin{split}
\widetilde{\eta}_\ast(x)&=\Big|\mathcal T_\gamma\big([b_{Q(y,\ell)}-b]f_2\big)(x)\Big|\\
&\leq C\sum_{j=1}^\infty\frac{1}{|Q(y,2^{j+1}\sqrt{n}\ell)|^{1-\gamma/n}}\int_{Q(y,2^{j+1}\sqrt{n}\ell)}\big|b(z)-b_{Q(y,\ell)}\big|\cdot|f(z)|\,dz.
\end{split}
\end{equation*}
This, together with Chebyshev's inequality implies that
\begin{equation*}
\begin{split}
L'_4(y,\ell)&\leq4\cdot w(Q(y,\ell))^{1/{\alpha}-1/p-1/q}\bigg(\int_{Q(y,\ell)}\big|\widetilde{\eta}_\ast(x)\big|^pw(x)\,dx\bigg)^{1/p}\\
&\leq C\cdot w(Q(y,\ell))^{1/{\alpha}-1/q}
\sum_{j=1}^\infty\frac{1}{|Q(y,2^{j+1}\sqrt{n}\ell)|^{1-\gamma/n}}\int_{Q(y,2^{j+1}\sqrt{n}\ell)}\big|b(z)-b_{Q(y,\ell)}\big|\cdot|f(z)|\,dz\\
&\leq C\cdot w(Q(y,\ell))^{1/{\alpha}-1/q}
\sum_{j=1}^\infty\frac{1}{|Q(y,2^{j+1}\sqrt{n}\ell)|^{1-\gamma/n}}\int_{Q(y,2^{j+1}\sqrt{n}\ell)}\big|b(z)-b_{Q(y,2^{j+1}\sqrt{n}\ell)}\big|\cdot|f(z)|\,dz\\
&+C\cdot w(Q(y,\ell))^{1/{\alpha}-1/q}
\sum_{j=1}^\infty\frac{1}{|Q(y,2^{j+1}\sqrt{n}\ell)|^{1-\gamma/n}}\int_{Q(y,2^{j+1}\sqrt{n}\ell)}\big|b_{Q(y,2^{j+1}\sqrt{n}\ell)}-b_{Q(y,\ell)}\big|\cdot|f(z)|\,dz\\
&:=L'_5(y,\ell)+L'_6(y,\ell).
\end{split}
\end{equation*}
An application of H\"older's inequality leads to that
\begin{equation*}
\begin{split}
L'_5(y,\ell)&\leq C\cdot w(Q(y,\ell))^{1/{\alpha}-1/q}
\sum_{j=1}^\infty\frac{1}{|Q(y,2^{j+1}\sqrt{n}\ell)|^{1-\gamma/n}}\bigg(\int_{Q(y,2^{j+1}\sqrt{n}\ell)}|f(z)|^p\nu(z)\,dz\bigg)^{1/p}\\
&\times\bigg(\int_{Q(y,2^{j+1}\sqrt{n}\ell)}\big|b(z)-b_{Q(y,2^{j+1}\sqrt{n}\ell)}\big|^{p'}\nu(z)^{-p'/p}\,dz\bigg)^{1/{p'}}\\
&=C\cdot w(Q(y,\ell))^{1/{\alpha}-1/q}\sum_{j=1}^\infty\frac{\big\|f\cdot\chi_{Q(y,2^{j+1}\sqrt{n}\ell)}\big\|_{L^p(\nu)}}{|Q(y,2^{j+1}\sqrt{n}\ell)|^{1-\gamma/n}}\\
&\times\big|Q(y,2^{j+1}\sqrt{n}\ell)\big|^{1/{p'}}\Big\|\big[b-b_{Q(y,2^{j+1}\sqrt{n}\ell)}\big]\cdot\nu^{-1/p}\Big\|_{\mathcal C,Q(y,2^{j+1}\sqrt{n}\ell)},
\end{split}
\end{equation*}
where $\mathcal C(t)=t^{p'}$ is a Young function. Moreover, in view of \eqref{U2} and \eqref{final}, we can deduce that
\begin{equation*}
\begin{split}
L'_5(y,\ell)&\leq C\|b\|_*\cdot w(Q(y,\ell))^{1/{\alpha}-1/q}
\sum_{j=1}^\infty\frac{\big\|f\cdot\chi_{Q(y,2^{j+1}\sqrt{n}\ell)}\big\|_{L^p(\nu)}}{|Q(y,2^{j+1}\sqrt{n}\ell)|^{1/p-\gamma/n}}
\cdot\Big\|\nu^{-1/p}\Big\|_{\mathcal A,Q(y,2^{j+1}\sqrt{n}\ell)}\\
&=C\|b\|_*\sum_{j=1}^\infty w\big(Q(y,2^{j+1}\sqrt{n}\ell)\big)^{1/{\alpha}-1/p-1/q}\big\|f\cdot\chi_{Q(y,2^{j+1}\sqrt{n}\ell)}\big\|_{L^p(\nu)}
\cdot\frac{w(Q(y,\ell))^{1/{\alpha}-1/q}}{w(Q(y,2^{j+1}\sqrt{n}\ell))^{1/{\alpha}-1/q}}\\
&\times\frac{w(Q(y,2^{j+1}\sqrt{n}\ell))^{1/p}}{|Q(y,2^{j+1}\sqrt{n}\ell)|^{1/p-\gamma/n}}
\cdot\Big\|\nu^{-1/p}\Big\|_{\mathcal A,Q(y,2^{j+1}\sqrt{n}\ell)}\\
&\leq C\|b\|_*\sum_{j=1}^\infty w\big(Q(y,2^{j+1}\sqrt{n}\ell)\big)^{1/{\alpha}-1/p-1/q}\big\|f\cdot\chi_{Q(y,2^{j+1}\sqrt{n}\ell)}\big\|_{L^p(\nu)}
\cdot\frac{w(Q(y,\ell))^{1/{\alpha}-1/q}}{w(Q(y,2^{j+1}\sqrt{n}\ell))^{1/{\alpha}-1/q}}\\
&\times\frac{|Q(y,2^{j+1}\sqrt{n}\ell)|^{1/{(r'p)}}}{|Q(y,2^{j+1}\sqrt{n}\ell)|^{1/p-\gamma/n}}
\bigg(\int_{Q(y,2^{j+1}\sqrt{n}\ell)}w(z)^r\,dz\bigg)^{1/{(rp)}}\cdot\Big\|\nu^{-1/p}\Big\|_{\mathcal A,Q(y,2^{j+1}\sqrt{n}\ell)}\\
&\leq C\|b\|_*\sum_{j=1}^\infty w\big(Q(y,2^{j+1}\sqrt{n}\ell)\big)^{1/{\alpha}-1/p-1/q}\big\|f\cdot\chi_{Q(y,2^{j+1}\sqrt{n}\ell)}\big\|_{L^p(\nu)}
\cdot\frac{w(Q(y,\ell))^{1/{\alpha}-1/q}}{w(Q(y,2^{j+1}\sqrt{n}\ell))^{1/{\alpha}-1/q}}.
\end{split}
\end{equation*}
The last inequality is obtained by the $A_p$-type condition $(\S\S')$ on $(w,\nu)$. Let us estimate the last term $L'_6(y,\ell)$. Applying Lemma \ref{BMO}$(i)$ and H\"older's inequality, we get
\begin{equation*}
\begin{split}
L'_6(y,\ell)&\leq C\cdot w(Q(y,\ell))^{1/{\alpha}-1/q}
\sum_{j=1}^\infty\frac{(j+1)\|b\|_*}{|Q(y,2^{j+1}\sqrt{n}\ell)|^{1-\gamma/n}}\int_{Q(y,2^{j+1}\sqrt{n}\ell)}|f(z)|\,dz\\
&\leq C\cdot w(Q(y,\ell))^{1/{\alpha}-1/q}
\sum_{j=1}^\infty\frac{(j+1)\|b\|_*}{|Q(y,2^{j+1}\sqrt{n}\ell)|^{1-\gamma/n}}\bigg(\int_{Q(y,2^{j+1}\sqrt{n}\ell)}|f(z)|^p\nu(z)\,dz\bigg)^{1/p}\\
&\times\bigg(\int_{Q(y,2^{j+1}\sqrt{n}\ell)}\nu(z)^{-p'/p}\,dz\bigg)^{1/{p'}}\\
\end{split}
\end{equation*}
\begin{equation*}
\begin{split}
&=C\|b\|_*\sum_{j=1}^\infty w\big(Q(y,2^{j+1}\sqrt{n}\ell)\big)^{1/{\alpha}-1/p-1/q}\big\|f\cdot\chi_{Q(y,2^{j+1}\sqrt{n}\ell)}\big\|_{L^p(\nu)}\\
&\times\big(j+1\big)\cdot\frac{w(Q(y,\ell))^{1/{\alpha}-1/q}}{w(Q(y,2^{j+1}\sqrt{n}\ell))^{1/{\alpha}-1/q}}
\cdot\frac{w(Q(y,2^{j+1}\sqrt{n}\ell))^{1/p}}{|Q(y,2^{j+1}\sqrt{n}\ell)|^{1-\gamma/n}}\bigg(\int_{Q(y,2^{j+1}\sqrt{n}\ell)}\nu(z)^{-p'/p}\,dz\bigg)^{1/{p'}}.
\end{split}
\end{equation*}
Also observe that the condition $(\S\S')$ is stronger than the condition $(\S')$. Using this fact along with \eqref{U2}, we get
\begin{equation*}
\begin{split}
L'_6(y,\ell)&\leq C\|b\|_*\sum_{j=1}^\infty w\big(Q(y,2^{j+1}\sqrt{n}\ell)\big)^{1/{\alpha}-1/p-1/q}\big\|f\cdot\chi_{Q(y,2^{j+1}\sqrt{n}\ell)}\big\|_{L^p(\nu)}\\
&\times\big(j+1\big)\cdot\frac{w(Q(y,\ell))^{1/{\alpha}-1/q}}{w(Q(y,2^{j+1}\sqrt{n}\ell))^{1/{\alpha}-1/q}}\\
&\times\frac{|Q(y,2^{j+1}\sqrt{n}\ell)|^{1/{(r'p)}}}{|Q(y,2^{j+1}\sqrt{n}\ell)|^{1-\gamma/n}}
\bigg(\int_{Q(y,2^{j+1}\sqrt{n}\ell)}w(z)^r\,dz\bigg)^{1/{(rp)}}
\bigg(\int_{Q(y,2^{j+1}\sqrt{n}\ell)}\nu(z)^{-p'/p}\,dz\bigg)^{1/{p'}}\\
&\leq C\|b\|_*\sum_{j=1}^\infty w\big(Q(y,2^{j+1}\sqrt{n}\ell)\big)^{1/{\alpha}-1/p-1/q}\big\|f\cdot\chi_{Q(y,2^{j+1}\sqrt{n}\ell)}\big\|_{L^p(\nu)}\\
&\times\big(j+1\big)\cdot\frac{w(Q(y,\ell))^{1/{\alpha}-1/q}}{w(Q(y,2^{j+1}\sqrt{n}\ell))^{1/{\alpha}-1/q}}.
\end{split}
\end{equation*}
Summing up all the above estimates, we conclude that
\begin{align}\label{L2prime}
L'_2(y,\ell)&\leq C\sum_{j=1}^\infty w\big(Q(y,2^{j+1}\sqrt{n}\ell)\big)^{1/{\alpha}-1/p-1/q}\big\|f\cdot\chi_{Q(y,2^{j+1}\sqrt{n}\ell)}\big\|_{L^p(\nu)}\notag\\
&\times\big(j+1\big)\cdot\frac{w(Q(y,\ell))^{1/{\alpha}-1/q}}{w(Q(y,2^{j+1}\sqrt{n}\ell))^{1/{\alpha}-1/q}}.
\end{align}
Thus, by taking the $L^q(\mu)$-norm of both sides of \eqref{Lprime}(with respect to the variable $y$), and then using Minkowski's inequality, \eqref{L1prime} and \eqref{L2prime}, we finally obtain
\begin{equation*}
\begin{split}
&\Big\|w(Q(y,\ell))^{1/{\alpha}-1/p-1/q}\big\|[b,\mathcal T_{\gamma}](f)\cdot\chi_{Q(y,\ell)}\big\|_{WL^p(w)}\Big\|_{L^q(\mu)}\\
&\leq\big\|L'_1(y,\ell)\big\|_{L^q(\mu)}+\big\|L'_2(y,\ell)\big\|_{L^q(\mu)}\\
&\leq C\Big\|w(Q(y,2\sqrt{n}\ell))^{1/{\alpha}-1/p-1/q}\big\|f\cdot\chi_{Q(y,2\sqrt{n}\ell)}\big\|_{L^p(\nu)}\Big\|_{L^q(\mu)}\\
&+C\sum_{j=1}^\infty\Big\|w\big(Q(y,2^{j+1}\sqrt{n}\ell)\big)^{1/{\alpha}-1/p-1/q}\big\|f\cdot\chi_{Q(y,2^{j+1}\sqrt{n}\ell)}\big\|_{L^p(\nu)}\Big\|_{L^q(\mu)}\\
&\times\big(j+1\big)\cdot\frac{w(Q(y,\ell))^{1/{\alpha}-1/q}}{w(Q(y,2^{j+1}\sqrt{n}\ell))^{1/{\alpha}-1/q}}\\
&\leq C\big\|f\big\|_{(L^p,L^q)^{\alpha}(\nu,w;\mu)}+C\big\|f\big\|_{(L^p,L^q)^{\alpha}(\nu,w;\mu)}
\times\sum_{j=1}^\infty\big(j+1\big)\cdot\frac{w(Q(y,\ell))^{1/{\alpha}-1/q}}{w(Q(y,2^{j+1}\sqrt{n}\ell))^{1/{\alpha}-1/q}}\\
&\leq C\big\|f\big\|_{(L^p,L^q)^{\alpha}(\nu,w;\mu)},
\end{split}
\end{equation*}
where the last inequality is due to \eqref{6}. We therefore conclude the proof of Theorem \ref{mainthm:8} by taking the supremum over all $\ell>0$.
\end{proof}

\end{document}